\newtheorem{thm}{Theorem}[section]
\newtheorem{cor}[thm]{Corollary}
\newtheorem{lem}[thm]{Lemma}
\newtheorem{prop}[thm]{Proposition}
\theoremstyle{definition}
\newtheorem{rem}[thm]{Remark}
\newtheorem{question}[thm]{Question}
\numberwithin{equation}{section}
\newcommand{\cl}{\operatorname{cl}}
\newcommand{\cov}{\operatorname{cov}}
\newcommand{\vint}{\operatorname{int}}
\newcommand{\id}{\operatorname{id}}
\begin{document}


\baselineskip=17pt



\title{Inverse systems with simplicial bonding maps and cell structures}
\author{Wojciech D\k{e}bski}
\author{Kazuhiro Kawamura}
\address{University of Tsukuba,Tsukuba, Japan}
\email{kawamura@math.tsukuba.ac.jp}

\author{Murat Tuncali}
\address{ Mathematics and Computer Science, Nipissing University, North Bay, Canada}
\email{muratt@nipissingu.ca}
\author{E.D. Tymchatyn}
\address{E. D. Tymchatyn, Dept. of Maths. and  Stats., University of Saskatchewan, 
106 Wiggins Rd. Saskatoon Sk. S7N5E6 Canada}
\email{tymchat@math.usask.ca}
\date{July 21, 2019}
\begin{abstract}
For a topologically complete spaceneed $X$ and a family of closed covers $\mathcal A$ of $X$ satisfying a `` local refinement condition'' and a ``completeness condition,'' we give a construction of an inverse system $\boldmath{N}_{\mathcal A}$ of simplicial complexes and simplicial bonding maps such that the limit space $N_{\infty} = \varprojlim \boldmath{N}_{\mathcal A}$ is homotopy equivalent to $X$.
A connection with cell structures \cite{DebskiTymchatynMetr}, \cite{DebskiTymchatyn} is discussed.

\end{abstract}
\subjclass[2010]{Primary: 54C05, 54B35, 54D30, 54E15. Secondary: 78A70.}
\keywords{polyhedral inverse system, flag complexes, cell structures, 
discrete approximation of spaces, shape theory}
\thanks{The second author is supported by JSPS KAKENHI Grant Number 17K05241. 
The third named author is partially supported by National Science and Engineering Research Council Discovery Grant.}

\setcounter{footnote}{0}

\maketitle

\section{Introduction}

The notion of inverse limits and its variants provides a useful scheme for studying topological spaces by polyhedra.  For every topological space $X$, there associates an inverse 
system  ${\bf X}= ( X_{\lambda},\pi^{\mu}_{\lambda}:X_{\mu} \to X_{\lambda}; \Lambda ) $ of polyhedra $X_\lambda$ and continuous maps $\pi^{\mu}_{\lambda}$ that reflects the topology of $X$.  Such a system is typically given by a polyhedral resolution in the sense of \cite{MardesicSegal} which yields an HPol-expansion defining the shape type of $X$. In most cases, the bonding maps $\pi_{\lambda}^{\mu}$ are continuous or piecewise linear: if the polyhedra $X_\lambda$'s are equipped with specific triangulations and the bonding maps are required to be simplicial with respect to the triangulations, then the inverse limit 
$\varprojlim \mathbf{X}$ may not be  homeomorphic, but closely related, to $X$.
This has been made explicit by 
Dydak in \cite[Theorem 2.8]{Dydak}: for each compact metrizable space $X$, there exist an inverse sequence $\bold X$ of finite simplicial complexes and simplicial bonding maps, and 
a hereditary shape equivalence $f:\varprojlim \bold X \to X$.  
See also \cite{RubinTonic}.

This note studies a similar construction for non-compact spaces.  For a topologically complete space $X$ and a family $\mathcal A$ of {\it closed} locally finite normal coverings of $X$ satisfying a `` local refinement condition'' and a `` completeness condition,'' we construct an inverse system ${\bf N}_{\mathcal A} = ( N_{\lambda}, \pi^{\mu}_{\lambda}:\Lambda) $ of simplicial complexes and simplicial bonding maps, a proper map 
$\pi:N_{\infty}:= \varprojlim{\bf N}_{\mathcal A} \to X$, and a continuous map $p:X \to N_{\infty}$ such that $\pi \circ p = \mathrm{id}_{X}$ and $p \circ \pi$ is $\pi$-fiberwise homotopic to $\mathrm{id}_{N_\infty}$. In particular $N_{\infty}$ and $X$ have the same homotopy type.  Also the family $\mathcal A$ naturally defines an inverse system ${\bf F}_{\mathcal A} = (F_{\lambda}, \pi^{\mu}_{\lambda}:\Lambda)$, where each 
$F_{\lambda}$ is a flag complex which contains $N_{\lambda}$ as its subcomplex
and each $\pi_{\lambda}^{\mu}$ is a simplicial map, so that
$N_{\infty} = \varprojlim {\bf F}_{\mathcal A}$.  Since flag complexes are uniquely determined by their 1-skeletons, we could say that the space $N_\infty$ is induced by an inverse sequence of graphs and graph homomorphisms.

The above result gives a connection of inverse systems of simplicial bonding maps with cell-structures of topological spaces.
Cell structures first appeared in unpublished notes of D\k{e}bski in 1990. They are  inverse systems of vertex sets of graphs and graph homomorphisms (see \cite{DebskiTymchatynMetr}  and \cite {DebskiTymchatyn}).  Every topological space admits a cell structure that can  be thought of as a system of discrete approximations of the space.
It was proved in \cite[Theorem 2]{DebskiTymchatyn} that cell structures determine topologically complete spaces and continuous maps.
The above inverse system  ${\bf F}_{\mathcal A} = ( F_{\lambda}, \pi^{\mu}_{\lambda}:\Lambda) $ naturally defines a cell-structure. When the space $X$ is compact Hausdorff, then the system of projections ${\bf p} = (p_{\lambda}:N_{\infty} \to F_{\lambda})$ forms an HPol-expansion of the space $N_{\infty}$ and thus ${\bf p}\circ p = (p_{\lambda}\circ p)$ is an HPol-expansion of $X$.  Thus we could say that an HPol-expansion of a compact Hausdorff space is obtained from a cell-structure.

Our construction is motivated by a theorem of Marde{\v s}i{\' c} \cite{MardesicAPR} (see also \cite[Chap.I, Section 6.4, Theorem 7]{MardesicSegal}) stating that every topological space admits a polyhedral resolution.  
For each 
topologically complete space $X$ with a family $\mathcal A$ of closed locally finite normal covers of $X$ satisfying the above-mentioned conditions,  we modify the construction to obtain inverse systems ${\bf N}_{\mathcal A} = (N_{\lambda}, \pi^{\mu}_{\lambda}:N_{\mu} \to N_{\lambda})$ and 
$\bf{F}_{\mathcal A}$ with the desired properties.

Throughout, for a subset $S$ of a topological space $Y$, $\vint(S)$ and $\cl(S)$ denote the interior and the closure of $S$ in $Y$. 
For a cover \(\alpha \) of  \(Y\),  let $\text{star}_{\alpha}(S) =  
\{V \in \alpha~|~S \cap V \neq \emptyset \}$ and define inductively
\[
\mathrm{star}^{n+1}_{\alpha}(S) = 
\{ V \in \alpha~|~\mbox{there exists}~ W \in \text{star}_{\alpha}^{n}(S)~\mbox{such that}~V \cap W \neq \emptyset \}.
\]

Simplicial complexes are assumed to be endowed with specific triangulations and the weak topology with respect to the triangulations.  
For a simplicial complex $N$, $N^{(i)}$ denotes the $i$-skeleton of $N$. 
In particular $N^{(0)}$ is the set of the vertices of $N$. 
For a simplex $\sigma$ of $N$, 
$\text{Int}(\sigma)$ denotes the interior of $\sigma$, the points of $\sigma$ whose barycentric coordinates are all positive.
Following \cite{DavisMoussong} we say that a simplicial complex $F$ is a {\it flag complex} if every subset $\{v_{1},\ldots,v_{n}\}$ of $F^{(0)}$ with the property: $v_{i}, v_{j}$ span a 1-simplex for each $i \neq j$,  spans a simplex of $F$.  
The first barycentric subdivision of each simplicial complex is a flag complex.
The reader may consult \cite{EngelkingGT} for standard results in general topology 
and \cite{MardesicSegal} for results in shape theory.

\section{Construction of polyhedral inverse systems with simplicial bonding maps}
\label{sec-constr}

Recall that a {\it topologically complete space}  is a Tychonoff space which is complete in its finest uniformity. A closed locally finite cover $\alpha$ is called {\it normal}  if there exists a partition of unity $\Phi_{\alpha}=\{\phi_{ \alpha,V}\mid V\in\alpha \}$  subordinated to $\alpha$: 
each $\phi_{\alpha,V}:X \to [0,1]$ is continuous, satisfies
$\mathrm{supp}(\phi_{\alpha,V})= \mathrm{cl}(\phi_{\alpha,V}^{-1}((0,1])) \subset \mathrm{int}(V)$, and $\sum_{V\in \alpha}\phi_{\alpha,V} = 1$.
 In the remainder of this section we fix an arbitrary topologically complete space \(X\)
and fix a family  \(\mathcal A \) of closed, locally finite,  normal  covers of \(X\)  satisfying the following two conditions:

\begin{enumerate}

\item[I)] For each open subset \(U\) of  \(X\) and for each \(x\in U\), there exists \(\alpha\in A\) such that \(\bigcup\mathrm{star}_{\alpha}(x)\subset U\).
 \item[II)] For each $\alpha \in \mathcal{A}$ select a member $f(\alpha) \in \alpha$.  
If the collection \(\{f(\alpha)\}\) has the finite intersection property, then \(\bigcap _{\alpha\in A} f(\alpha)\ne \emptyset.\)
\end{enumerate}
It should be mentioned here that the family $\mathcal A$ need not be cofinal with respect to the refinement-order: a closed locally finite normal covering $\gamma$ of $X$ may not admit $\alpha \in {\mathcal A}$ that refines $\gamma$.  the condition I),  a ``local refinement'' requirement, is a substitute for the cofinality.

\begin{prop}
\label{prop-closed-her}
If \(Y\) is a closed subset of \(X\) then the conditions I) and II) are satisfied for the family 
\( \mathcal{A}|Y = \{ \alpha |Y ~|~\alpha \in \mathcal{A}\} \), where
$\alpha|Y = \{F\cap Y~|~F \in \alpha\}$.
\end{prop}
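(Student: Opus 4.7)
The plan is to verify each of conditions I) and II) for $\mathcal{A}|Y$ directly, after first noting that each $\alpha|Y$ remains a closed, locally finite, normal cover of $Y$; the restricted family $\{\phi_{\alpha,F}|_Y\}_{F \in \alpha}$ is still a subordinate partition of unity because $\mathrm{supp}(\phi_{\alpha,F}|_Y) \subset \mathrm{supp}(\phi_{\alpha,F}) \cap Y \subset \mathrm{int}_X(F) \cap Y \subset \mathrm{int}_Y(F \cap Y)$.

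For condition I), given $U$ open in $Y$ and $y \in U$, I would write $U = U' \cap Y$ with $U'$ open in $X$, and apply condition I) for $\mathcal{A}$ to obtain $\alpha \in \mathcal{A}$ with $\bigcup \mathrm{star}_\alpha(y) \subset U'$. Since $y \in Y$, one has $\mathrm{star}_{\alpha|Y}(y) = \{F \cap Y : F \in \mathrm{star}_\alpha(y)\}$, so intersecting the union with $Y$ yields $\bigcup \mathrm{star}_{\alpha|Y}(y) \subset U' \cap Y = U$.

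Condition II) is the main work. Given a selection $g(\alpha|Y) \in \alpha|Y$ with the finite intersection property, the plan is to lift it, using the axiom of choice, to a selection $\tilde g(\alpha) \in \alpha$ with $\tilde g(\alpha) \cap Y = g(\alpha|Y)$. Since $\tilde g(\alpha) \supset g(\alpha|Y)$, the FIP transfers to $\{\tilde g(\alpha)\}$, and condition II) for $X$ yields $\bigcap_\alpha \tilde g(\alpha) \neq \emptyset$. The crux is to force this intersection inside $Y$: if there were $x \in \bigcap_\alpha \tilde g(\alpha) \setminus Y$, then $X \setminus Y$ would be an open neighborhood of $x$ (as $Y$ is closed), and condition I) would supply $\alpha_0 \in \mathcal{A}$ with $\bigcup \mathrm{star}_{\alpha_0}(x) \subset X \setminus Y$. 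But $x \in \tilde g(\alpha_0)$ puts $\tilde g(\alpha_0)$ into $\mathrm{star}_{\alpha_0}(x)$ and therefore forces $\tilde g(\alpha_0) \cap Y = \emptyset$, contradicting the nonemptiness of $g(\alpha_0|Y) = \tilde g(\alpha_0) \cap Y$ (guaranteed by FIP applied to singletons). Thus $\bigcap_\alpha \tilde g(\alpha) \subset Y$, and $\bigcap g(\alpha|Y) = \bigcap \tilde g(\alpha) \neq \emptyset$.

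I expect the subtle point to be precisely this last step: condition II) for $X$ alone only places a point in $\bigcap \tilde g(\alpha)$, with no a priori reason for it to lie in $Y$. The combination of closedness of $Y$ with the ``local refinement'' condition I), applied to the open set $X \setminus Y$, is exactly what rules out an escaping point and traps the intersection inside $Y$.
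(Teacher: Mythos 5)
Your proof is correct and follows essentially the same route as the paper: lift the selection to $\mathcal{A}$, apply condition II) for $X$ to get a point in the intersection, and then use condition I) on the open set $X\setminus Y$ (with $Y$ closed) to show the point cannot escape $Y$. The extra details you supply (restricting the partition of unity, the explicit lifting by choice, and the final identification $\bigcap g(\alpha|Y)=\bigcap\tilde g(\alpha)$) are all sound and merely spell out what the paper leaves implicit.
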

\begin{proof}
The condition I)  is  clearly satisfied. Let \(f:A\rightarrow \bigcup A\) be a selection such that the sets \(f(\alpha)\cap Y\) have the finite intersection property. Then by II) \(\bigcap_{\alpha\in A} f(\alpha)\ne \emptyset\). Let \(x\in \bigcap_{\alpha\in A} f(\alpha)\). Assume \(x\notin Y\). Then by I) there exists \(\alpha\in A\) such that \(f(\alpha)\subset X\setminus Y\). Thus, \(f(\alpha)\cap Y=\emptyset\). This is a contradiction.
\end{proof}

\subsection{Construction}
Here we construct an inverse system $(F_{\lambda},\pi_{\lambda}^{\mu};\Lambda)$ with the limit space $F_\infty$ and a proper map $\pi:F_{\infty} \to X$ that is a homotopy equivalence.

Let \(\Lambda\) be the directed set of all finite subsets 
of \(\mathcal{A}\) ordered by inclusion.  It is a cofinite set in that each element of \( \Lambda \) has only finitely many predecessors.
For an element \(\lambda=\{\alpha_1,\ldots,\alpha_n\}\) of $\Lambda$, let 
\[
N_{\lambda}^{(0)} = \{v=(V_1,...,V_n)\in \alpha_1\times\cdots\times \alpha_n\mid V_1 \cap\ldots\cap V_n \neq \emptyset \},
\]
and for  \(v=(V_1,\cdots,V_n)\in N^{(0)}_{\lambda}\), let \(\wedge v=V_1\cap\ldots\cap V_n\).
Define the collection \(\cov_{\lambda} \) by
\[
\cov_{\lambda} = \{ \wedge v\mid v\in N^{(0)}_{\lambda} \}
\] 
which is a closed, locally finite, normal cover of \(X\) indexed by 
the set \(N_{\lambda}^{(0)}\); the local finiteness follows from that of $\alpha_{1},\ldots,\alpha_{n}$.  
For the existence of a partition of unity subordinated to \(\cov_{\lambda}\), let \(\{\phi_{\alpha_{i},V}~|~V\in \alpha_{i}\}\) be a partition of unity subordinated to \(\alpha_{i}\).  
For each \(v \in N_{\lambda}^{(0)}\)  with 
$v = (V_{1},\ldots,V_{n})$, let
\begin{equation}\label{eq:partition}
\phi_{v} = \phi_{\alpha_{1},V_{1}}\cdots \phi_{\alpha_{n},V_{n}}.
\end{equation}
It is readily verified that \(\{\phi_{ v}~|~ v \in N^{(0)}_{\lambda}\)\} is a partition of unity subordinated to the cover \(\cov_{\lambda}\) indexed by $N^{(0)}_{\lambda}$.
Note that it may be the case $\wedge v = \wedge w$ for distinct elements  $v$ and $w$ of $N_{\lambda}^{(0)}$.  

Let \(F_{\lambda}\)  be the flag complex defined as follows: the vertex set of \(F_{\lambda}\) is the set \(N_{\lambda}^{(0)}\); and a finite set \(\{v_1,v_2,...,v_n\}\), \(v_{i}\in 
N^{(0)}_\lambda \), spans a simplex of \(F_\lambda\) if and only if \(\wedge v_i \cap \wedge v_j\ne\emptyset\) for each pair \(i,j\in\{1,...,n\}\).
For each point $a$ of $F_\lambda$, there exists a unique simplex 
\(\sigma_{\lambda}(a)\) of $F_{\lambda}$ such that  
$a \in \text{Int} \sigma_{\lambda}(a)$. 
If \(\sigma_{\lambda}(a)\) has the vertex set 
\( \{v_{1},\ldots,v_{n}\}\) of $F_{\lambda}^{(0)}$,  then let \(\wedge a\) be the subset of $X$ (maybe empty)  defined by 
\[
\wedge a = \wedge v_1 \cap \cdots \cap\wedge v_k .
\] 
Let \(N_\lambda\)  be the subcomplex of  \(F_\lambda\) defined as follows:
the set of vertices is equal to $N_{\lambda}^{(0)}$ and a finite set \(\{v_1,v_2,...,v_n\}\) of 
vertices spans a simplex in \(N_\lambda\) if and only if \(\bigcap_{i=1}^n\wedge v_i \ne\emptyset\). 
By definition,  we have
$N_{\lambda}^{(1)} = F_{\lambda}^{(1)}$. Also for a point $a \in F_{\lambda}$, 
 \(\wedge a\not=\emptyset\) if and only if \(a\in N_{\lambda}\).
The complex $N_{\lambda}$ is almost the same as the nerve complex $\mathcal{N}(\cov_{\lambda})$ of \(\cov_\lambda\), except that distinct vertices of $N_{\lambda}$ may define the same vertex of $\mathcal{N}(\cov_{\lambda})$.  There exists a natural simplicial 
homotopy equivalence $N_{\lambda} \to \mathcal{N}(\cov_{\lambda})$ such that the inverse image of each simplex of $\mathcal{N}(\cov_{\lambda})$ is a simplex. 

For two elements $\lambda \leq \mu$ of $\Lambda$, written as 
\(\lambda = \{\alpha_{1},\ldots, \alpha_{n}\}$ and $\mu=\{\alpha_1,\ldots,\alpha_n,\ldots,\alpha_m\}$,  let 
\(\pi^{\mu}_{\lambda}:F_{\mu}\rightarrow F_{\lambda}\) be the simplicial map which sends each vertex \((V_1,\ldots,V_n,\ldots,V_m) \in \alpha_{1}\times \cdots \times \alpha_{m}\) of \(F^{(0)}_{\mu}\) to the vertex 
\((V_1,\ldots,V_n)\) of \(F^{(0)}_{\lambda}\).  We see
$
\pi^{\nu}_{\lambda}=\pi^{\mu}_{\lambda}\circ\pi^{\nu}_{\mu},~~
\lambda\leq\mu \leq \nu,
$
and obtain an inverse system \({\bf F}_{\mathcal A}= (F_{\lambda},\pi_{\lambda}^{\mu};\Lambda)\).  Due to the inclusion \(\pi^{\mu}_{\lambda}(N_{\mu}) \subset N_\lambda\), we have a subsystem
\({\bf N}_{\mathcal A} = (N_{\lambda},\pi_{\lambda}^{\mu};\Lambda)\).
Let    \(F_{\infty}=\varprojlim \bf{F}_{\mathcal A}\), 
\(N_{\infty}=\varprojlim \bf{N}_{\mathcal A}\).  For \(\lambda \in \Lambda\),
let \(\pi_{\lambda}:F_{\infty}\rightarrow F_{\lambda}, \pi_{\lambda}:N_{\infty}\rightarrow N_{\lambda},\) be the \(\lambda^{\text{th}}\) coordinate projections. 
For a point 
\(z\in F_\infty\),  the point \(\pi_{\lambda}(z)\) is also denoted by \(z(\lambda)\).

\begin{lem} \label{lem-n-cov-int-onepointN}
Under the above notation, we have the following.
\begin{itemize}
\item[(1)]  For each open subset $U$ of \(X\), for each \(x\in U\) and for each positive integer \(n\), there exists \(\lambda \in \Lambda\) such that \(
\bigcup\emph{star}^n_{\cov_\lambda}(x)\subset U\).
\item[(2)]  For each \(z\in F_{\infty}\), the set 
\(\bigcap_{\lambda\in\Lambda}(\wedge z(\lambda)) \) contains exactly one point.
\end{itemize}
\end{lem}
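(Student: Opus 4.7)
For Part (1), I argue by induction on $n$. The base case $n=1$ follows directly from condition I), since $\cov_{\{\alpha\}}=\alpha$. For the inductive step I iteratively apply condition I) to produce covers $\alpha_{0},\ldots,\alpha_{n-1}\in\mathcal{A}$ and open neighborhoods $U=U_{0}\supset U_{1}\supset\cdots\supset U_{n-1}\ni x$, where $\alpha_{i}$ is supplied by I) applied to $(U_{i},x)$ (so $\bigcup\mathrm{star}_{\alpha_{i}}(x)\subset U_{i}$) and $U_{i+1}:=X\setminus\bigcup\{V\in\alpha_{i}\mid x\notin V\}$ is open by local finiteness of $\alpha_{i}$ and satisfies $U_{i+1}\subset\bigcup\mathrm{star}_{\alpha_{i}}(x)$. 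Setting $\lambda=\{\alpha_{0},\ldots,\alpha_{n-1}\}$, I verify $\bigcup\mathrm{star}^{n}_{\cov_{\lambda}}(x)\subset U$ by tracing an $n$-fold star-chain: for $\wedge v_{n}\in\mathrm{star}^{n}_{\cov_{\lambda}}(x)$ with witnesses $y_{k}\in\wedge v_{k-1}\cap\wedge v_{k}$, writing $v_{k}=(V_{k}^{0},\ldots,V_{k}^{n-1})\in\alpha_{0}\times\cdots\times\alpha_{n-1}$, the fact that $x\in\wedge v_{1}$ forces $V_{1}^{n-1}\subset U_{n-1}$, hence $y_{2}\in U_{n-1}$; the definition of $U_{n-1}$ then forces $x\in V_{2}^{n-2}$, placing $V_{2}^{n-2}\subset U_{n-2}$ and $y_{3}\in U_{n-2}$. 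Iterating downward in the coordinate index, $y_{n}\in U_{1}$ forces $x\in V_{n}^{0}$, so $\wedge v_{n}\subset V_{n}^{0}\subset\bigcup\mathrm{star}_{\alpha_{0}}(x)\subset U$.

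For Part (2), uniqueness, suppose $x_{1}\ne x_{2}$ both lie in $\bigcap_{\lambda}\wedge z(\lambda)$. Since $X$ is Hausdorff (Tychonoff), pick open $U\ni x_{1}$ with $x_{2}\notin U$ and by I) a cover $\alpha\in\mathcal{A}$ with $\bigcup\mathrm{star}_{\alpha}(x_{1})\subset U$. For $\lambda=\{\alpha\}$ the vertex set $K_{\{\alpha\}}$ of $\sigma_{\{\alpha\}}(z)$ is a finite subset of $\alpha$, and $\wedge z(\lambda)=\bigcap_{V\in K_{\{\alpha\}}}V$ contains both $x_{1}$ and $x_{2}$. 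Since each such $V$ contains $x_{1}$, we have $V\in\mathrm{star}_{\alpha}(x_{1})$ and thus $V\subset U$; then $x_{2}\in V\subset U$, contradicting $x_{2}\notin U$.

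For Part (2), existence, let $K_{\lambda}$ be the vertex set of $\sigma_{\lambda}(z)$. Because $\pi^{\mu}_{\lambda}$ is simplicial and $\pi^{\mu}_{\lambda}(\sigma_{\mu}(z))=\sigma_{\lambda}(z)$, the restrictions $K_{\mu}\to K_{\lambda}$ are surjective. Since $\Lambda$ is cofinite directed and each $K_{\lambda}$ is a finite nonempty discrete space, $\varprojlim K_{\lambda}$ is nonempty and projects onto every $K_{\lambda_{0}}$ (standard Tychonoff argument). For a thread $\tau=\{w_{\lambda}\}$ define $f_{\tau}(\alpha):=V\in\alpha$ where $w_{\{\alpha\}}=(V)$. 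Thread coherence gives, for $\mu=\{\alpha_{1},\ldots,\alpha_{n}\}$, $w_{\mu}=(f_{\tau}(\alpha_{1}),\ldots,f_{\tau}(\alpha_{n}))\in N^{(0)}_{\mu}$, whence $\bigcap_{i}f_{\tau}(\alpha_{i})=\wedge w_{\mu}\ne\emptyset$, so $\{f_{\tau}(\alpha)\}_{\alpha}$ has FIP. By condition II) pick $x_{\tau}\in\bigcap_{\alpha}f_{\tau}(\alpha)$; then $x_{\tau}\in\wedge w_{\lambda}$ for every $\lambda$. If two threads $\tau\ne\tau'$ gave distinct points $x_{\tau}\ne x_{\tau'}$, separate them by open $U\ni x_{\tau}$, $x_{\tau'}\notin U$ and apply Part (1) with $n=2$ to find $\lambda$ with $\bigcup\mathrm{star}^{2}_{\cov_{\lambda}}(x_{\tau})\subset U$. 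The flag condition on $\sigma_{\lambda}(z)$ gives $\wedge w^{\tau}_{\lambda}\cap\wedge w^{\tau'}_{\lambda}\ne\emptyset$, so $\wedge w^{\tau'}_{\lambda}\in\mathrm{star}_{\cov_{\lambda}}(\wedge w^{\tau}_{\lambda})\subset\mathrm{star}^{2}_{\cov_{\lambda}}(x_{\tau})$, whence $\wedge w^{\tau'}_{\lambda}\subset\bigcup\mathrm{star}^{2}_{\cov_{\lambda}}(x_{\tau})\subset U$, contradicting $x_{\tau'}\in\wedge w^{\tau'}_{\lambda}\setminus U$. Hence there is a single point $x$ serving as $x_{\tau}$ for every thread, and extending each $v\in K_{\lambda}$ to a thread through $v$ gives $x\in\wedge v$, so $x\in\bigcap_{v\in K_{\lambda}}\wedge v=\wedge z(\lambda)$ for every $\lambda$. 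The hardest step is this thread-collapse: only Part (1) at level $n=2$ is strong enough to translate the pairwise meeting of vertices of $\sigma_{\lambda}(z)$ into a single-$U$ bound.
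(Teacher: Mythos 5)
Your proposal is correct and takes essentially the same approach as the paper: your part (1) is the paper's induction unrolled explicitly (at each stage shrinking the neighborhood, via local finiteness, to the set of points lying only in cover elements that contain $x$), and your part (2) uses the same vertex-thread construction in $\varprojlim \sigma_{\lambda}(z(\lambda))^{(0)}$, condition II) applied to the selection induced by a thread, and condition I) to force all the resulting points to coincide and to give uniqueness. The only tactical deviation is in the thread-independence step, where the paper enlarges the index to $\mu=\lambda\cup\{\alpha\}$ and uses a first-order star at level $\mu$ together with closedness of $\wedge z_1(\lambda)$, while you stay at a fixed $\lambda$ and invoke part (1) with $n=2$ via adjacency of vertices of $\sigma_{\lambda}(z(\lambda))$ --- an equally valid variant (so your closing claim that only the $n=2$ form suffices is slightly overstated).
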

\begin{proof}
(1) We use induction on $n$. The case \(n=1\) follows from the condition I).
Assume that (1) holds for $(n-1)$ and apply the condition I) to take $\alpha\in {\mathcal A}$ such that 
$\bigcup\text{star}_{\cov_{\alpha}}(x)\subset U$.  
By the local finiteness of the covering $\cov_{\{\alpha\}}$ there exists an open neighborhood $V$ of $x$ such that
\[
w \in \cov_{\{\alpha\}},~w \cap V \neq \emptyset \Rightarrow x \in w.
\]
Use the induction hypothesis to find a  \(\lambda\in\Lambda\) such that
 \(\bigcup\text{star}^{n-1}_{\lambda}(x)\subset V\).
We show
\[
\bigcup\text{star}^{n}_{\cov_{\lambda \cup \{\alpha\}}}(x)\subset U.
\]
For each $c \in \text{star}^{n}_{\cov_{\lambda \cup \{\alpha\}}}(x)$, there exists
$d \in \text{star}^{n-1}_{\cov_{\lambda \cup \{\alpha\}}}(x)$ such that 
$c \cap d \neq \emptyset$.  The elements $c$ and $d$ are written as
\[
c = u \cap a,~d = v \cap b, ~~u,v \in \cov_{\lambda}, a,b \in \alpha.
\]
Then $v$ is in $\text{star}^{n-1}_{\lambda}(x)$ and hence 
$v \subset V$.  Thus the choice of $V$ and the inclusion
\[
a\cap V \supset u\cap v \supset c \cap d \neq \emptyset
\]
imply that $x \in a$.  Hence $a \subset \bigcup \text{star}_{\cov_{\lambda}}(x) \subset U$.
Thus $c = u \cap a \subset U$.

(2) Let \(z\) be a point of $F_{\infty}$. For $\lambda \in \Lambda$, the point 
$z(\lambda)$ is an interior point of the unique simplex
$\sigma_{\lambda}(z(\lambda))$ of $F_\lambda$. Let
$\{ v_{j}~|~j=1,\ldots,k \}$ be  the set of vertices of  
\(\sigma_\lambda(z(\lambda))\) and recall $\wedge z(\lambda) = \cap_{j=1}^{k}\wedge v_{j}$.
For each $\lambda, \mu \in \Lambda$ with $\lambda \leq \mu$ we have
$\pi^{\mu}_\lambda(\sigma_{\mu}(z(\mu))^{(0)})= \sigma_\lambda(z(\lambda))^{(0)}$, and 
each $\sigma_{\lambda}(z({\lambda}))^{(0)}$ is a finite set.
We thus have an inverse system $(\sigma_{\lambda}(z(\lambda))^{(0)}, \pi_{\lambda}^{\mu}; \Lambda)$ of finite sets and surjective bonding maps so that the limit space 
$\varprojlim \sigma_{\lambda}(z(\lambda))^{(0)}$ is nonempty and projects onto each $\sigma_{\lambda}(z(\lambda))^{(0)}$.  Hence
there exists \(z_{0}\in F_{\infty}\) such that \(z_{0}(\lambda)\ \in \sigma_{\lambda}^{(0)} \subset F_{\lambda}^{(0)} \) for each  \(\lambda\in\Lambda\).
Note that for \(\lambda<\mu\), \(\wedge z_{0}(\mu)\subset \wedge z_{0}(\lambda)\).
Let $\mathcal{F}_{0} = \{ \wedge z_{0}(\lambda)~|~\lambda \in \Lambda\}$.  By the above remark, it has the finite intersection property and thus by the condition II),  the intersection \(\bigcap_{\lambda\in \Lambda}\wedge z_{0}(\lambda)\)\ is non-empty.  
If two distinct points $p,q$ are in the set, we take an open neighborhood $U$ of $p$ which does not contain $q$ and apply the condition I) to find $\lambda \in \Lambda$ such that $\bigcup\text{star}_{\cov_\lambda}(p) \subset U$.  Since $p\in z_{0}(\lambda)$, we see $z_{0}(\lambda) \in \text{star}_{\cov_{\lambda}}(p)$ and thus $\wedge z_{0}(\lambda) \subset U$.
But then $q \in \wedge z_{0}(\lambda) \subset U$, a contradiction.  This implies that
\(\bigcap_{\lambda\in \Lambda}\wedge z_{0}(\lambda)\)\ is a singleton.
Let \(\{x\}=\bigcap_{\lambda\in\Lambda}\wedge z_{0}(\lambda)\).

Next we show that the above $x$ does not depend on the choice of $z_0$.
Let \(z_{1}\) be another point of $F_{\infty}$ such that \(z_{1}(\lambda)  \in \sigma_\lambda(z)^{(0)}\)  for each \(\lambda\).
Since \(z_{0}(\lambda)\) and \(z_{1}(\lambda)\) are vertices of 
$\sigma_{\lambda}(z(\lambda)) \in F_{\lambda}$,  we have \(\wedge z_{0}(\lambda)\cap \wedge z_{1}(\lambda) \neq \emptyset\).
Let $U$ be an arbitrary open neighborhood of $x$ and use the condition I) to find $\alpha \in \mathcal A$ such that $\bigcup \mathrm{star}_{\alpha}(x) \subset U$.  Let $\mu = \lambda \cup \{\alpha\}$ for which we have $\bigcup \mathrm{star}_{\cov_\mu}(x) \subset U$.
Since $x \in \wedge z_{0}(\mu)$ we have
$\wedge z_{0}(\lambda) \in \text{star}_{\cov_\mu}(x)$ and hence $\wedge z_{0}(\lambda) \subset U$.  We see
\[
U\cap \wedge z_{1}(\lambda) \supset U \cap \wedge z_{1}(\mu) \supset
\wedge z_{0}(\mu)\cap \wedge z_{1}(\mu) \neq \emptyset.
\]
Thus $x \in \cl(\wedge z_{1}(\mu)) = \wedge z_{1}(\mu)$.  Hence $\{x\} = \cap_{\lambda}(\wedge z_{1}(\lambda))$.

Finally we show $\{x\} = \cap_{\lambda}(\wedge z(\lambda))$.  For each vertex $\wedge v$ of 
$\sigma_{\lambda}(z(\lambda))$, we choose $z_{0} \in F_{\infty}^{(0)}$ such that 
$z_{0}(\lambda) = \wedge v$ and $z_{0}(\mu) \in \sigma_{\mu}(z_{0}(\mu))^{(0)}$ for each 
$\mu$. We then have $x \in \wedge v$ by the above.  Thus $x \in \wedge z(\lambda)$ for each $\lambda \in \Lambda$, which completes the proof of (2).
\end{proof}

The space \(N_{\infty}=\varprojlim \bf{N}_{\mathcal A}\) is a closed subspace of $F_{\infty}$, 
but more holds.

\begin{prop}
\label{thm-equal-limits-2} 
We have the equality \( N_\infty = F_\infty\).
\end{prop}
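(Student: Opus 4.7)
The inclusion $N_\infty \subset F_\infty$ is immediate from the fact that each $N_\lambda$ is a subcomplex of $F_\lambda$ and the bonding maps $\pi_\lambda^\mu$ restrict to the subsystem $\mathbf{N}_{\mathcal{A}}$. So the content of the proposition is the reverse inclusion $F_\infty \subset N_\infty$, and my plan is to reduce this to Lemma \ref{lem-n-cov-int-onepointN}(2) via the characterization of $N_\lambda$ inside $F_\lambda$ that was recorded right after the definition of $N_\lambda$.

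Fix $z \in F_\infty$. To show $z \in N_\infty$ it suffices to show $z(\lambda) \in N_\lambda$ for every $\lambda \in \Lambda$. Recall that each point $a \in F_\lambda$ lies in the interior of a unique simplex $\sigma_\lambda(a)$ of $F_\lambda$, and by the construction of $N_\lambda$ as a subcomplex of $F_\lambda$ one has the equivalence
\[
a \in N_\lambda \iff \wedge a \neq \emptyset,
\]
since $\wedge a$ is by definition the intersection of $\wedge v$ over the vertices $v$ of $\sigma_\lambda(a)$, and $\sigma_\lambda(a)$ is a simplex of $N_\lambda$ precisely when this intersection is nonempty (here we use that whenever a vertex set spans a simplex of $N_\lambda$, so does every face, so $\sigma_\lambda(a) \subset N_\lambda$ iff its vertices satisfy the $N_\lambda$-incidence condition).

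Applying this criterion to $a = z(\lambda)$, the desired conclusion $z(\lambda) \in N_\lambda$ reduces to $\wedge z(\lambda) \neq \emptyset$. But Lemma \ref{lem-n-cov-int-onepointN}(2) states precisely that $\bigcap_{\lambda \in \Lambda} \wedge z(\lambda)$ is a single point, and in particular each individual term $\wedge z(\lambda)$ of this intersection must be nonempty. This gives $z(\lambda) \in N_\lambda$ for every $\lambda$, hence $z \in N_\infty$.

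There is no real obstacle here; the whole work has been done in Lemma \ref{lem-n-cov-int-onepointN}(2), whose proof used both of the standing conditions (I) and (II) on $\mathcal{A}$. The only thing to take care in writing the argument is to be explicit about the bijective correspondence between simplices of $N_\lambda$ and simplices of $F_\lambda$ whose vertex set $\{v_1,\dots,v_k\}$ satisfies $\bigcap_{i=1}^k \wedge v_i \neq \emptyset$, so that the equivalence $a \in N_\lambda \iff \wedge a \neq \emptyset$ is justified before invoking the lemma.
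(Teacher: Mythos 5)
Your argument is correct and is essentially the paper's own proof: the paper likewise takes $z \in F_\infty$, applies Lemma \ref{lem-n-cov-int-onepointN}(2) to get a point $x \in \bigcap_{\lambda}(\wedge z(\lambda))$, and concludes that each $\sigma_\lambda(z(\lambda))$ is a simplex of $N_\lambda$ since all its vertices contain $x$, which is exactly your criterion $\wedge z(\lambda) \neq \emptyset \iff z(\lambda) \in N_\lambda$. No gaps; the extra care you suggest about the face-closedness of the $N_\lambda$-condition is fine but already implicit in the paper's definition.
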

\begin{proof}
Let \(z = (z(\lambda))_{\lambda} \in F_\infty\).
By Lemma 2.2 (2) we have $\bigcap_{\lambda} (\wedge z(\lambda))$ is a singleton $\{x\}$.  Then for $\lambda \in \Lambda$, each vertex of $\sigma_{\lambda}(z(\lambda))$ contains the point $x$ and thus $\sigma_{\lambda}(z(\lambda))$ is a simplex of the complex $N_\lambda$.  Hence $z(\lambda) \in N_{\lambda}$ and $z \in N_{\infty}$.
\end{proof} 
Since $N_{\infty} = F_{\infty}$ is the limit of flag complexes $F_\lambda$ which are uniquely determined by their 1-skeletons, we may say that the limit space $N_\infty$ is determined by an inverse system of graphs.  
Notice that the barycentric subdivision $\mathrm{sd} N_{\lambda}$ is always a flag complex 
and we thus obtain an inverse system
$(\mathrm{sd} N_{\lambda}, \mathrm{sd} \pi_{\lambda}^{\mu}; \Lambda)$ whose limit is homeomorphic to 
$N_{\infty}$, yet the graph $(\mathrm{sd} N_{\lambda})^{(1)}$ is not isomorphic to $N_{\lambda}^{(1)}$.  Here we obtain the flag complex $F_{\lambda}$ with $F_{\lambda}^{(1)}= N_{\lambda}^{(1)}$.

\bigskip
\noindent

In view of Lemma 2.2 and Proposition 2.3, we define a map $\pi:N_{\infty} \to X$ by
\[
\{ \pi(z) \} = \cap_{\lambda}(\wedge z(\lambda))
\]
for $z = (z(\lambda))_{\lambda} \in N_{\infty} = F_{\infty}$.  It is convenient to introduce the space $N_{\infty}^{(0)} = \varprojlim(N_{\lambda}^{(0)},\pi_{\lambda}^{\mu}|N_{\mu}^{(0)};\Lambda)$.

\begin{prop}
\begin{enumerate}
\item[(1)] The map $\pi$ is continuous.
\item[(2)] For points $z,z' \in N_{\infty}^{(0)}$, $\pi(z') = \pi(z)$ if and only if $\wedge z'(\lambda) \in \emph{star}_{\cov_\lambda}(\wedge z(\lambda))$ for each $\lambda \in \Lambda$.
\item[(3)] For points $z \in N_{\infty}$ and $z_{0} \in N_{\infty}^{(0)}$ such that
$z_{0}(\lambda) \in \sigma_{\lambda}(z(\lambda))^{(0)}$ for each $\lambda$, we have 
$\pi(z) = \pi(z_{0})$.
\end{enumerate}
\end{prop}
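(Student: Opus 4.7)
The plan is to handle the three parts in order; each reduces to a direct application of Lemma 2.2 together with simplicial bookkeeping.

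For (1), I would use the fact (via Proposition 2.3) that a basic neighborhood of $z$ in $N_\infty = F_\infty$ may be taken of the form $\pi_\lambda^{-1}(W)$, where $W = \mathrm{St}(\sigma_\lambda(z(\lambda)); F_\lambda)$ is the open star of the carrier simplex of $z(\lambda)$ in the weak topology. Given an open $U \ni \pi(z) =: x$, apply Lemma 2.2(1) with $n = 1$ to select $\lambda$ with $\bigcup \mathrm{star}_{\cov_\lambda}(x) \subset U$. For any $z' \in \pi_\lambda^{-1}(W)$ the simplex $\sigma_\lambda(z'(\lambda))$ has $\sigma_\lambda(z(\lambda))$ as a face, so for every vertex $v$ of $\sigma_\lambda(z(\lambda))$ one has $\wedge z'(\lambda) \subset \wedge v$; since $x \in \wedge z(\lambda) \subset \wedge v$, the member $\wedge v \in \cov_\lambda$ lies in $\mathrm{star}_{\cov_\lambda}(x) \subset U$. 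Hence $\pi(z') \in \wedge z'(\lambda) \subset \wedge v \subset U$, giving continuity.

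For (2), the direction $(\Rightarrow)$ is immediate: if $\pi(z)=\pi(z')$, then this common point lies in $\wedge z(\lambda)\cap\wedge z'(\lambda)$ for every $\lambda$, which is the asserted star condition. For $(\Leftarrow)$, I would argue by contradiction: suppose $\pi(z)\ne \pi(z')$ and choose an open $U$ containing $\pi(z)$ but missing $\pi(z')$. Apply Lemma 2.2(1) with $n=2$ to pick $\lambda$ with $\bigcup \mathrm{star}^2_{\cov_\lambda}(\pi(z))\subset U$. Since $\pi(z)\in\wedge z(\lambda)$, we have $\wedge z(\lambda)\in\mathrm{star}_{\cov_\lambda}(\pi(z))$; combined with the hypothesis $\wedge z'(\lambda)\cap\wedge z(\lambda)\ne\emptyset$, this places $\wedge z'(\lambda)$ in $\mathrm{star}^2_{\cov_\lambda}(\pi(z))$, whence $\pi(z')\in\wedge z'(\lambda)\subset U$, contradicting the choice of $U$.

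For (3), the hypothesis $z_0(\lambda)\in\sigma_\lambda(z(\lambda))^{(0)}$ gives $\wedge z(\lambda)\subset\wedge z_0(\lambda)$ for every $\lambda$, so the unique point $\pi(z)\in\bigcap_\lambda\wedge z(\lambda)$ also lies in $\bigcap_\lambda\wedge z_0(\lambda)$; by Lemma 2.2(2) the latter intersection is the singleton $\{\pi(z_0)\}$, whence $\pi(z)=\pi(z_0)$.

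I do not anticipate a serious obstacle. The two points requiring a little care are, in (1), verifying that $\mathrm{St}(\sigma_\lambda(z(\lambda)); F_\lambda)$ is genuinely open in the weak topology so that $\pi_\lambda^{-1}(\mathrm{St}(\sigma_\lambda(z(\lambda))))$ is an honest neighborhood of $z$, and, in (2), using the \emph{second} star iterate rather than the first so that both $\wedge z(\lambda)$ and $\wedge z'(\lambda)$ are absorbed into $U$.
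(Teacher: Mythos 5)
Your proposal is correct and follows essentially the same route as the paper: the open-star/face argument for (1), the star-intersection argument for (2), and the inclusion $\wedge z(\lambda)\subset\wedge z_{0}(\lambda)$ for (3). The only cosmetic difference is in the converse direction of (2), where you invoke Lemma 2.2(1) with $n=2$ and a single neighborhood of $\pi(z)$ missing $\pi(z')$, while the paper applies condition I) twice to get disjoint neighborhoods of $\pi(z)$ and $\pi(z')$ and then passes to $\lambda=\{\alpha,\alpha'\}$; both amount to the same star computation.
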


\begin{proof}
(1)  Take a point $z \in N_{\infty}$ and let $U$ be an open neighborhood of $\pi(z)$.  
Take 
$\lambda $ such that $\bigcup\text{star}_{\cov_\lambda}(\pi(z)) \subset U$ by Lemma 2.2 (1).
Let 
$V_{\lambda} = \cup\{ \text{Int} \sigma~|~z(\lambda) \in \sigma \in N_{\lambda}\}$
be the open star of $z(\lambda)$ in $N_{\lambda}$.  It is an open neighborhood of $z(\lambda)$ and we show that
\[
\pi(\pi_{\lambda}^{-1}(V_{\lambda})) \subset U.
\]
Let $w \in \pi_{\lambda}^{-1}(V_{\lambda})$.  For each $\lambda$, we see  
$\sigma_{\lambda}(w(\lambda))$ contains $z(\lambda)$.  Thus $\sigma_{\lambda}(z(\lambda))$ is a face of $\sigma_{\lambda}(w(\lambda))$.  This implies
$\wedge w(\lambda) \subset \wedge z(\lambda)$ and thus
$\pi(w) \in \wedge w(\lambda) \subset \wedge z(\lambda)$.
Hence
\[
\pi(w) \in \wedge z(\lambda) \subset \bigcup \text{star}_{\cov_\lambda}(\pi(z)) \subset U
\]
and the conclusion follows.

(2)  Suppose  \(\pi(z)=\pi(z')\) for \(z,z'\in N^{(0)}_{\infty}\) and let \(x=\pi(z)\).
For each $\lambda$ we see \(x\in \wedge z(\lambda)\cap \wedge z'(\lambda)\). Thus \(\wedge z(\lambda)\cap \wedge z'(\lambda)\not=\emptyset\) and $\wedge z'(\lambda) \in \text{star}_{\cov_\lambda}(\wedge z(\lambda))$ for each \(\lambda\in\Lambda\).

Conversely let \(\wedge z'(\lambda)\in\text{star}_{\cov_\lambda}(\wedge z(\lambda))\) for each \(\lambda\in\Lambda\).
Let \(x=\pi(z)\), \(x'=\pi(z')\) and assume \(x'\not=x\).
Take disjoint open sets \(U\) and \(U'\) containing \(x\) and \(x'\) respectively.
By I) there are \(\alpha, \alpha' \in\mathcal{A}\) such that \(\bigcup\mathrm{star}_{\alpha}(x)\subset U\) and \(\bigcup\mathrm{star}_{\alpha}(x')\subset U'\).
Since \(x\in\wedge z(\{\alpha \}) \subset U \) and \(x'\in\wedge z(\{\alpha'\})\subset U'\), we have \(\wedge z(\{\alpha\})\cap\wedge z'(\{\alpha'\})= \emptyset\).
Thus \(\wedge z(\lambda)\cap\wedge z'(\lambda)=\emptyset\) for \(\lambda=\{\alpha,\alpha'\}\), a contradiction.

(3) Let \(z\in N_{\infty}\) and \(z_{0} \in N^{(0)}_\infty\) such that \(z_{0}(\lambda)\) is a vertex of the simplex \(\sigma_{\lambda}(z(\lambda))\) for each \(\lambda\in\Lambda\).
Since \(\wedge z({\lambda})\subset\wedge z_{0}({\lambda})\) we have
 \(\pi(z)\in\bigcap_{\lambda\in\Lambda} (\wedge z_{0}(\lambda))\).
Thus, \(\pi(z)=\pi(z_{0})\).
\end{proof}

\smallskip

To study the map $\pi$ further, let us introduce a map $p:X \to F_{\infty} = N_{\infty}$ as follows: for each $\lambda = \{\alpha_1,\ldots,\alpha_n\}\in\Lambda$, we define
\(p_{\lambda}:X \to N_{\lambda}\) as the canonical map associated with the 
partition of unity (\ref{eq:partition}).  It has the property
\begin{equation}\label{eq:canonical}
p_{\lambda}^{-1}(\cup \text{star}_{\cov_\lambda}(\wedge v)) \subset \wedge v
\end{equation}
for each $\wedge v \in \cov_\lambda$.  As is proved in \cite[p. 85]{MardesicSegal}, 
we have \(p_{\lambda}=\pi^{\mu}_{\lambda}\circ p_{\mu}\) for \(\lambda\leq\mu \) and hence obtain a system of maps ${\bf p} = (p_{\lambda}) :X\to {\bf N}_{\mathcal A} = (N_{\lambda},\pi_{\lambda}^{\mu};\Lambda )$
which induces the limit map $p = \varprojlim \mathbf{p}:X \to N_{\infty}$.
The map $p$ is continuous and satisfies 
\(p_{\lambda}=\pi_{\lambda}\circ p\)  for each $\lambda$.

\begin{thm}\label{thm:pi}
Under the above notation, we have the following.
\begin{itemize}
\item[(1)] The map $\pi:N_{\infty} = F_{\infty} \to X$ is a perfect map. \item[(2)] We have
\begin{enumerate}
\item[(2a)] $\pi \circ p = \id_{X}$, and
\item[(2b)] there exists a homotopy $H:N_{\infty}\times [0,1] \to N_{\infty}$ such that $H_{0} = \id_{N_\infty}, H_{1}= p\circ \pi$ and $\pi \circ H = \pi \circ \mathrm{proj}$, where 
$\mathrm{proj}:N_{\infty}\times [0,1] \to N_{\infty}$ denotes the projection onto $N_{\infty}$.
In particular each fiber of $\pi$ is contractible.
\end{enumerate}
\end{itemize}
\end{thm}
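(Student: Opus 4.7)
The plan is to prove (2a) by direct calculation, then construct the homotopy $H$ in (2b), and finally deduce perfectness in (1) using these ingredients together with local finiteness of the covers. For (2a), fix $x \in X$ and $\lambda = \{\alpha_1, \ldots, \alpha_n\}$; the $v$-coordinate of $p_\lambda(x)$ is $\phi_v(x) = \prod_i \phi_{\alpha_i, V_i}(x)$, which is positive only when $x \in V_i$ for each $i$. Hence every vertex of $\sigma_\lambda(p_\lambda(x))$ satisfies $x \in \wedge v$, giving $x \in \wedge p_\lambda(x)$ for each $\lambda$ and therefore $x \in \bigcap_\lambda \wedge p(x)(\lambda) = \{\pi(p(x))\}$ by Lemma~\ref{lem-n-cov-int-onepointN}(2).

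For (2b), the guiding geometric observation is that, for every $z \in N_\infty$ and every $\lambda$, both $z(\lambda)$ and $p_\lambda(\pi(z))$ lie in the single simplex of $N_\lambda$ spanned by $\{v \in N_\lambda^{(0)} : \pi(z) \in \wedge v\}$---the former because $\pi(z) \in \wedge z(\lambda)$ puts $\pi(z)$ in $\wedge v$ for every vertex $v$ of $\sigma_\lambda(z(\lambda))$, the latter by (2a). I define the straight-line homotopy in that simplex by
\[
H_\lambda(z, t) = (1 - t)\, z(\lambda) + t\, p_\lambda(\pi(z)) \in N_\lambda.
\]
The identities $\pi^\mu_\lambda \circ p_\mu = p_\lambda$ and $\pi^\mu_\lambda(z(\mu)) = z(\lambda)$ combined with the affineness of each simplicial $\pi^\mu_\lambda$ force $\pi^\mu_\lambda \circ H_\mu = H_\lambda$, so $H = (H_\lambda)_\lambda$ lifts to $N_\infty$; the endpoint conditions $H_0 = \id_{N_\infty}$ and $H_1 = p \circ \pi$ are immediate, and fiber-preservation $\pi \circ H = \pi \circ \mathrm{proj}$ holds because $\sigma_\lambda(H_\lambda(z,t))$ is a face of the simplex above, so $\pi(z) \in \wedge H_\lambda(z,t)$ for every $\lambda$. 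Continuity of each $H_\lambda$ is a local matter: using local finiteness of each $\alpha_i \in \lambda$ around $\pi(z_0)$, a neighborhood of $z_0$ is mapped by $H_\lambda$ into a fixed finite subcomplex of $N_\lambda$, where the straight-line formula is visibly continuous.

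For (1), continuity of $\pi$ is Proposition~2.4(1) and surjectivity follows from (2a). Fibers are compact because $\pi^{-1}(y) = \varprojlim K_\lambda(y)$, where $K_\lambda(y)$ is the simplex spanned by $\{v : y \in \wedge v\}$, finite by local finiteness of the $\alpha_i$. For closedness, given $C \subset N_\infty$ closed and $y \in X \setminus \pi(C)$, I cover the compact set $\pi^{-1}(y) \subset N_\infty \setminus C$ by finitely many basic open neighborhoods and combine their indices into a single $\mu \in \Lambda$, yielding an open $W \subset N_\mu$ with $\pi^{-1}(y) \subset \pi_\mu^{-1}(W) \subset N_\infty \setminus C$. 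Local finiteness around $y$ then produces a neighborhood $U \ni y$ such that, for each member of $\mu$, only those of its elements that already contain $y$ meet $U$; this forces $z(\mu) \in K_\mu(y)$ for every $z \in \pi^{-1}(U)$. Using a selection $f:\mathcal{A} \to \bigcup \mathcal{A}$ with $y \in f(\alpha)$ for each $\alpha$ (available since $\alpha$ covers $X$), every point of $K_\mu(y)$ extends to an element of $\pi^{-1}(y)$, so $K_\mu(y) = \pi_\mu(\pi^{-1}(y)) \subset W$; therefore $\pi^{-1}(U) \subset \pi_\mu^{-1}(W)$ and $U \cap \pi(C) = \emptyset$. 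I expect the closedness step to be the main obstacle, since it entangles the local-finiteness shrinking of $U$ with the inverse-limit surjectivity $K_\mu(y) = \pi_\mu(\pi^{-1}(y))$ achieved via coherent selections.
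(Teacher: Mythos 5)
Your proposal is correct, and for (2a), the straight-line homotopy in (2b), and the compactness of fibers via $\pi^{-1}(y)=\varprojlim K_\lambda(y)$ it coincides with the paper's argument (your justification that $z(\lambda)$ and $p_\lambda(\pi(z))$ lie in the common simplex $K_\lambda$ spanned by $\{v: \pi(z)\in\wedge v\}$ is in fact a more accurate phrasing than the paper's claim that both lie in $\sigma_\lambda(z(\lambda))$, and your local-finiteness argument for continuity of $H_\lambda$ supplies a detail the paper only asserts). Where you genuinely diverge is in proving that $\pi$ is closed, and in getting surjectivity. The paper proves surjectivity by taking the inverse limit of the finite sets $C_\lambda=\{w: x\in\wedge w\}$, whereas you simply quote (2a), which is a legitimate simplification given your ordering of the steps. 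For closedness the paper argues directly: given closed $G$ and $x\in \mathrm{cl}(\pi(G))$, it forms the nonempty compacta $G_\lambda=K_\lambda\cap \mathrm{cl}(\pi_\lambda(G))$, takes a point of $\varprojlim G_\lambda$, and checks it lies in $G$ and maps to $x$. You instead show the complement of $\pi(C)$ is open by a tube-type argument around the compact fiber: finitely many basic open sets give a single index $\mu$ and an open $W\subset N_\mu$ with $\pi^{-1}(y)\subset\pi_\mu^{-1}(W)\subset N_\infty\setminus C$, local finiteness gives $U\ni y$ with $z(\mu)\in K_\mu(y)$ for all $z\in\pi^{-1}(U)$, and the key lemma $K_\mu(y)=\pi_\mu(\pi^{-1}(y))$ yields $K_\mu(y)\subset W$ and hence $U\cap\pi(C)=\emptyset$. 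That key lemma is true, but it is the one place where your sketch compresses real work: you must either carry out the coherent extension of a point of $K_\mu(y)$ to a thread (lifting each vertex $(V_1,\ldots,V_m)$ of $C_\mu(y)$ to $(V_1,\ldots,V_m,f(\beta),\ldots)$ with $y\in f(\beta)$, checking compatibility over all $\nu\geq\mu$, and invoking Lemma \ref{lem-n-cov-int-onepointN}(2) to see the resulting thread maps to $y$), or observe that the bonding maps $\pi^\nu_\mu$ send $K_\nu(y)$ onto $K_\mu(y)$ (vertex sets surject via such selections, and affine maps carry spanning simplices onto spanning simplices) and use compactness of the $K_\lambda(y)$. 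The paper's route avoids this surjectivity-of-the-fiber-projection issue altogether at the cost of the slightly more delicate verification that the limit point of the $G_\lambda$ actually belongs to $G$; your route buys a more standard ``closed map with compact fibers'' picture but must establish that the fiber fills out the whole simplex $K_\mu(y)$ at level $\mu$.
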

\begin{proof}
(1) Fix an arbitrary point $x \in X$.  We first prove that 
$\pi^{-1}(x)$ is non-empty and compact. 
For each $\lambda=\{\alpha_{1},\ldots,\alpha_{n}\}$, let
\begin{equation}\label{eq:Clambda}
C_{\lambda} = \{ w \in N_{\lambda}^{(0)}~|~x \in \wedge w\}
\end{equation}
which is a finite subset of $N_{\lambda}^{(0)}$ due to the local finiteness of the covers \(\alpha_1,\ldots\alpha_n\).
If $\mu \geq \lambda$ then \(x\in\wedge\pi_{\lambda}^{\mu}(u)\) and \(\pi_{\lambda}^{\mu}(u)\in C_{\lambda}\) for each \(u\in C_{\mu}\).
Thus
$(C_{\lambda},\pi_{\lambda}^{\mu}|C_{\mu};\Lambda)$ forms an inverse system of finite sets and hence has the nonempty inverse limit.  If $z = (z(\lambda)) \in \varprojlim C_{\lambda}$, then $x \in \wedge z(\lambda)$ and thus $\sigma_{\lambda}(z(\lambda))^{(0)} \subset C_{\lambda}$ for each $\lambda$.  By the definition of the map $\pi$, we have $x = \pi(z)$. Next let $K_{\lambda}$ be the simplex of $N_{\lambda}$ spanned by $C_{\lambda}$.
We prove
\begin{equation}\label{eq:fiber}
\pi^{-1}(x) = \varprojlim K_{\lambda}.
\end{equation}
If $\pi(z) = x$, then $x \in z(\lambda)$ for each $\lambda$, which implies that $\sigma_{\lambda}(z(\lambda))^{(0)} \subset C_{\lambda}$ and hence $z(\lambda) \in K_{\lambda}$. Thus we have $z \in \varprojlim K_{\lambda}$ and $\pi^{-1}(x) \subset \varprojlim K_{\lambda}$.  The reverse inclusion is straightforward. 
This proves the above and hence $\pi^{-1}(x)$ is compact. 

Next we show that $\pi$ is a closed map.  Let $G$ be a closed subset of $N_{\infty}$ and take an arbitrary point $x \in \cl(\pi(G))$.  Under the above notation, we consider the simplex $K_{\lambda}$, $\lambda \in \Lambda$, for the point $x$.
First observe that $G_{\lambda} := K_{\lambda}\cap \cl(\pi_{\lambda}(G))$ is compact as a closed subset of the simplex \(K_{\lambda}\).
Let  \(U=X\setminus\bigcup\{\wedge v\mid x\not\in \wedge v, v \in N^{(0)}_{\lambda}\}\).
It is an open set containing $x$.  
It follows that there exists $z \in G$ such that
$\pi(z) \in U$. This implies $\wedge z(\lambda) \cap U \neq \emptyset$.
For each vertex $v \in \sigma_{\lambda}(z(\lambda))$, we have 
$x \in \wedge v$ by the definition of $U$ and thus $z(\lambda) \in K_{\lambda}\cap\pi_{\lambda}(G) \subset G_{\lambda}$ and \(G_{\lambda}\not=\emptyset\).  
It follows that $\pi_{\lambda}^{\mu}(G_{\mu}) \subset G_{\lambda}$ and hence $\varprojlim (G_{\lambda},\pi_{\lambda}^{\mu})$ is a nonempty compact subset of $N_{\infty}$.  Let \(z'\in\varprojlim (G_{\lambda},\pi^{\mu}_\lambda)\).
Then we have \(z'\in G\):  if not, there exists an open neighborhood $O$ of $z'$ such that $G \cap O = \emptyset$ because $G$ is closed.  Take an index $\lambda$ and an open neighborhood $V$ of $z(\lambda)$ such that
$\pi_{\lambda}^{-1}(V) \subset O$.  Then we see that $z'(\lambda) \notin \cl\pi_{\lambda}(G)$, a contradiction.
Also \(x\in \wedge z'(\lambda)\) for each \(\lambda\in\Lambda\).
Thus we see \(x=\pi(z')\) and hence $\pi(G)$ is closed.
This proves (1).

(2) For each $x \in X$ and for each $\lambda$, take any vertex $v \in \sigma_{\lambda}(p_{\lambda}(x))$.  We see $p_{\lambda}(x)  \in \text{star}_{\cov_\lambda}(\wedge v)$ and by (\ref{eq:canonical}) $x \in p_{\lambda}^{-1}(\text{star}_{\cov_\lambda}(\wedge v)) \subset \wedge v$.  Hence we have $x \in \wedge p_{\lambda}(x)$ for each
$\lambda$.  It follows from the definition of $\pi$ that $\pi (p(x)) = x$. This proves the statement (2.a).
To prove the statement (2.b), 
we define a homotopy \(H:N_{\infty}\times I\rightarrow N_{\infty}\) as follows: 
for a point $z \in N_{\infty}$ and $\lambda$, both $z(\lambda) = \pi_{\lambda}(z)$ and 
$p_{\lambda}(\pi(z))$ are points of the simplex $\sigma_{\lambda}(z(\lambda))$.  Thus
\[
H_{\lambda}(z,t) =t p_{\lambda}(\pi(z))+(1-t)\pi_{\lambda}(z)
\]
is a well-defined point of $\sigma_{\lambda}(z(\lambda))$.
Also we have $H_{\lambda}(z,t) = \pi_{\lambda}^{\mu}(H_{\mu}(z,t))$ for 
$\lambda \leq \mu$ because $\pi_{\lambda}^{\mu}$ is a simplicial map. Hence $H(z,t) = (H_{\lambda}(z,t))$ is a well-defined point of 
$N_\infty$. This defines a continuous homotopy between $\id_{N_\infty}$ and $p\circ \pi$.  In order to verify $\pi \circ H = \pi$, we take $z \in N_{\infty}$ and set $x = \pi(z)$.  By the definition of $\pi$, we see $x \in \wedge z(\lambda)$. Let $C_{\lambda}$ be the finite subset defined in (\ref{eq:Clambda}) for $x$ and let $K_\lambda$ be the simplex spanned by $C_\lambda$.  Each vertex of $\sigma_{\lambda}(z(\lambda))$ belongs to $C_{\lambda}$, from which it follows that $\sigma_{\lambda}(z(\lambda)) \subset K_{\lambda}$.  The equality (\ref{eq:fiber}) shows that $z \in \varprojlim \sigma_{\lambda}(z(\lambda)) \subset \pi^{-1}(x)$.  Hence we have $\pi \circ H(z,t) = \pi(z)$. This proves (2.b).
\end{proof}

We have a system of inclusions $\mathbf{i}= (i_{\lambda}:N_{\lambda}\hookrightarrow F_{\lambda})$.  Proposition \ref{thm-equal-limits-2} does not guarantee that $\mathbf i$ is an isomorphism in pro-Top (see \cite{MardesicSegal}).  When the space $X$ is paracompact and $\mathcal A$ is a cofinal family, it is indeed the case.

\begin{thm}
\label{prop-inclusion-F-N-nv}
Assume that \(X\) is a paracompact space  and \(\mathcal{A}\) be a cofinal subfamily of all closed, locally finite, normal covers of \(X\). Then for each $\lambda \in \Lambda$ there exists $\mu \in \Lambda$ with $\mu \geq \lambda$ such that
$\pi_{\lambda}^{\mu}(F_{\mu}) \subset N_{\lambda}$. 
In particular 
the system of the inclusions $(N_{\lambda} \hookrightarrow F_{\lambda}; \lambda \in \Lambda)$ is an isomorphism in the category pro-Top.
\end{thm}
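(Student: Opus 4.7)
The plan is to exploit paracompactness and cofinality of $\mathcal{A}$ to select, for any given $\lambda \in \Lambda$, an $\alpha \in \mathcal{A}$ fine enough that pairwise intersection at the $\mu$-level forces global intersection at the $\lambda$-level. Fix $\lambda \in \Lambda$ and, for each $x \in X$, set
\[
O(x) = X \setminus \bigcup\{\wedge v \mid v \in N_{\lambda}^{(0)},\ x \notin \wedge v\}.
\]
Local finiteness of $\cov_{\lambda}$ shows that $O(x)$ is an open neighborhood of $x$ with the key property that $\wedge v \cap O(x) \neq \emptyset$ forces $x \in \wedge v$. Since $X$ is paracompact, the open cover $\{O(x) \mid x \in X\}$ admits an open star-refinement $\mathcal{V}$; a standard shrinking argument then yields a closed locally finite normal cover refining $\mathcal{V}$, and by cofinality of $\mathcal{A}$ one may choose $\alpha \in \mathcal{A}$ each of whose members lies inside some $V \in \mathcal{V}$. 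Put $\mu = \lambda \cup \{\alpha\}$.

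To check $\pi_{\lambda}^{\mu}(F_{\mu}) \subset N_{\lambda}$, suppose $\{u_{1},\ldots,u_{k}\}$ spans a simplex of $F_{\mu}$ with $u_{j} = (V_{1}^{j},\ldots,V_{n}^{j},A_{j})$ and $A_{j} \in \alpha$, and write $v_{j} = \pi_{\lambda}^{\mu}(u_{j}) = (V_{1}^{j},\ldots,V_{n}^{j})$. The flag condition in $F_{\mu}$ gives $\wedge u_{i} \cap \wedge u_{j} \neq \emptyset$ for all $i,j$, so $A_{i} \cap A_{j} \neq \emptyset$. Choose $V_{j} \in \mathcal{V}$ with $A_{j} \subset V_{j}$; the $V_{j}$'s are pairwise intersecting, hence all lie in $\mathrm{star}(V_{1},\mathcal{V})$, whose union sits inside some $O(x)$ by the star-refinement property. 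For any $z_{j} \in \wedge u_{j} \subset \wedge v_{j} \cap O(x)$, the defining property of $O(x)$ gives $x \in \wedge v_{j}$, and as this holds for every $j$ one obtains $x \in \bigcap_{j} \wedge v_{j}$, so $\{v_{1},\ldots,v_{k}\}$ spans a simplex of $N_{\lambda}$, as desired.

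The pro-Top isomorphism then follows formally. Writing $\mu(\lambda) \geq \lambda$ for an index supplied by the first part, the maps $r_{\lambda} := \pi_{\lambda}^{\mu(\lambda)}\colon F_{\mu(\lambda)} \to N_{\lambda}$ assemble into a pro-morphism $\mathbf{r}\colon \mathbf{F}_{\mathcal{A}} \to \mathbf{N}_{\mathcal{A}}$: compatibility for $\lambda \leq \lambda'$ is checked at any $\nu \geq \mu(\lambda),\mu(\lambda')$, where both composites reduce to $\pi_{\lambda}^{\nu}$. The compositions $\mathbf{i}\circ\mathbf{r}$ and $\mathbf{r}\circ\mathbf{i}$ are, level-wise, bonding maps of $\mathbf{F}_{\mathcal{A}}$ and $\mathbf{N}_{\mathcal{A}}$ respectively, and hence represent the identity in pro-Top. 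The main obstacle is the first step, namely arranging an $\alpha \in \mathcal{A}$ whose members lie inside members of a prescribed open star-refinement; this requires combining the star-refinement characterization of paracompactness with a shrinking argument to produce a closed locally finite \emph{normal} cover refining $\mathcal{V}$ (so that the paper's specific notion of normality, with supports of the partition of unity lying inside interiors of the closed cover members, is respected), at which point cofinality of $\mathcal{A}$ finishes the job. Once this is in place the combinatorial step above is immediate.
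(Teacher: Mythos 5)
Your proof is correct and takes essentially the same route as the paper: your $O(x)$ is exactly the paper's $U_x$, and the paper likewise takes a locally finite open star-refinement of $\{U_x\}$, a closed locally finite normal refinement of it, and then $\alpha\in\mathcal{A}$ refining that, with $\mu=\lambda\cup\{\alpha\}$, before running the same pairwise-intersection argument to get a common point $x\in\bigcap_j \wedge\pi_\lambda^\mu(u_j)$. The only cosmetic difference is at the end, where you verify the pro-Top inverse morphism by hand while the paper simply invokes Morita's Lemma.
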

\begin{proof}
Let $\lambda = \{ \alpha_{1},\ldots, \alpha_{n}\}$.  
For each point $x$ of $X$, 
let \(U_x=X\setminus\bigcup\{F\in\cov_{\lambda}\mid x\not\in F\}\).
Take a locally finite open star refinement \(\mathcal{U}\) of \(X\) of \(\{U_x\mid x\in X\}\) and its closed normal locally finite refinement $\mathcal V$.
There is a cover $\alpha \in \mathcal A$ which refines $\mathcal V$.
Let $\mu = \{\alpha_{1},\ldots, \alpha_{n},\alpha\} \geq\lambda$.
Then $\cov_{\mu}$ refines the cover \(\alpha \) and hence \(\mathcal U\).
Let \(v_1,\ldots,v_m\) be the vertices of an simplex of \(F_{\mu}\).
Then \(\wedge v_i\cap\wedge v_j\not=\emptyset\), \(i,j=1,\ldots,m\).
Since each \(\wedge v_i\) is contained in some \(U\in\mathcal{U}\) and \(\mathcal U \) is a  star refinement of the cover \(\{U_x\mid x\in X\}\), we find a point $x\in X$ such that 
$U_x$ contains all \(\wedge v_{i},~i=1,\ldots m\).
Thus \(x\in\wedge\pi^{\mu}_{\lambda}(v_i)\) for \(i=1,\ldots,m\) 
and \(\{\pi_{\lambda}^{\mu}(v_{1}), \ldots, \pi_{\lambda}^{\mu}(v_{m})\}\) spans a simplex of 
$N_\lambda$.  Thus $\pi_{\lambda}^{\mu}(F_{\mu}) \subset N_{\lambda}$.
The last statement follows from the first and Morita's Lemma \cite[Chap. II, Section 2,2, Theorem 5]{MardesicSegal}.
\end{proof}

\subsection{Polyhedral expansions}

\bigskip
Every polyhedral resolution (\cite{MardesicSegal}) of a space $X$ gives an HPol-expansion (\cite{MardesicSegal}) defining the shape type  of $X$.  Likewise the above inverse systems $(N_{\lambda},\pi_{\lambda}^{\mu};\Lambda)$ and $(F_{\lambda},\pi_{\lambda}^{\mu};\Lambda)$ yield polyhedral expansions of spaces 
under some mild assumption on the collection $\mathcal A$.  The expansions so obtained have two features: (i) the polyhedra $N_\lambda$ or $F_\lambda$ have specific triangulations and the bonding maps are simplicial with respect to these triangulations, while the resolution constructed in \cite[Chap.1, Section 6.4, Theorem 7]{MardesicSegal} has continuous, but not necessarily simplicial, bonding maps.  (ii) for the systems $(N_{\lambda},\pi_{\lambda}^{\mu};\Lambda)$ and
$(F_{\lambda},\pi_{\lambda}^{\mu};\Lambda)$, the equality $\pi_{\lambda}^{\nu} = \pi_{\lambda}^{\mu}\circ \pi_{\mu}^{\nu}$ holds for $\lambda \leq \mu \leq \nu$, while in the classical  {\v C}ech system on the basis of refinement-relation, 
 $\pi_{\lambda}^{\nu}$ is only homotopic to $\pi_{\lambda}^{\mu}\circ \pi_{\mu}^{\nu}$.

Here, rather than recalling the notion of HPol-expansions, we follow the formulation of \cite{MoritaCCoh} (see also \cite[Chap.I, section 2.4]{MardesicSegal}).  For spaces $Y$ and $K$, 
$[Y,K]$ denotes the set of homotopy classes of maps $Y \to K$.  
Every continuous map $g:X \to Y$ induces a function $g^{\#}:[Y,K] \to [X,K]$. 
The inverse system 
$(N_{\lambda},\pi_{\lambda}^{\mu};\Lambda)$ yields a direct system
$([N_{\lambda},K],(\pi_{\lambda}^{\mu})^{\#};\Lambda)$ of sets which defines the direct limit(or colimit) $\varinjlim ([N_{\lambda},K],(\pi_{\lambda}^{\mu})^{\#};\Lambda)$ in the category {\bf Sets} of sets and functions.
The above map $p_{\lambda}:X \to N_{\lambda}$ induces a function  
$p^{\#}_{\lambda}:[N_{\lambda},K] \to [X,K]$ for each $\lambda$ which yields the limit map
$p^{\sharp}:\varinjlim ([N_{\lambda},K],(\pi_{\lambda}^{\mu})^{\#};\Lambda) \rightarrow [X,K]$.
In the sequel, $K$ is assumed to be a simplicial complex with the weak topology. Since the classes of spaces that are homotopy equivalent to
\begin{itemize}
\item[(i)] simplicial complexes with the weak topology,
\item[(ii)] simplicial complexes with the metric topology,
\item[(iii)] CW complexes,
\item[(iv)] metric ANR's,
\end{itemize}
all coincide (\cite{Milnor}, \cite{MardesicSegal} etc),  we may assume that $K$ belong to any of the above classes. The system 
$(p_{\lambda}:X \to N_{\lambda})$ is an HPol-expansion of $X$ 
(see \cite{MardesicSegal}), or a system satisfying (M1) and (M2) of
\cite[p.256]{MardesicANR}, precisely when the above $p^{\#}$ is a bijection for each simplicial complex $K$.
Shape theory for general topological spaces was studied in \cite{MoritaSh} where it was shown that $\text{Sh}(X) = \text{Sh}(\tau X) = \text{Sh}(\mu X)$ where 
$\tau$ denotes the Tychonoff functor \cite{MoritaCCoh} and $\mu X$ denotes the completion with respect to the finest uniformity of $\tau X$.  Thus there is no loss of generality in assuming that the spaces are topologically complete in the next results.

\begin{prop}
\label{prop-bijection-top}
Let \(X\) be a topologically complete space and \(\mathcal{A}\) be a cofinal subfamily of all closed, locally finite, normal covers of \(X\). Then $\mathcal A$ satisfies the conditions I) and II).  Further we have a bijection
\begin{equation}\label{eq:directlim1}
p^{\sharp}:\varinjlim ([N_{\lambda},K],(\pi_{\lambda}^{\mu})^{\#};\Lambda) \rightarrow [X,K]
\end{equation}
for each simplicial complex \(K\).  
If \(X\) is a paracompact space, then
\begin{equation}\label{eq:directlim2}
p^{\sharp}:\varinjlim ([F_{\lambda},K],(\pi_{\lambda}^{\mu})^{\#};\Lambda) \rightarrow [X,K]
\end{equation}
is also a bijection for each simplicial complex \(K\).
\end{prop}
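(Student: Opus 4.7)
The plan is to verify that $(p_{\lambda} : X \to N_{\lambda})_{\lambda \in \Lambda}$ is an HPol-expansion of $X$ in the sense of Morita \cite[p.~256]{MardesicANR}; its conditions (M1) and (M2) are precisely the requirement that $p^{\sharp}$ be a bijection for every simplicial complex $K$.

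First I would check that $\mathcal A$ satisfies I) and II). For I), given an open $U \ni x$, the finest uniformity on $X$ supplies a normal open cover $\omega$ with $\bigcup \text{star}(x,\omega) \subset U$; the subordinated partition of unity of a locally finite normal open refinement of $\omega$ gives, via its support sets, a closed locally finite normal refinement of $\omega$, and cofinality of $\mathcal A$ then yields $\alpha \in \mathcal A$ refining this closed cover, whence $\bigcup \text{star}_{\alpha}(x) \subset U$. For II), the family $\{f(\alpha)\}_{\alpha \in \mathcal A}$ with the finite intersection property generates a filter base which is Cauchy in the finest uniformity: for each normal open cover $\omega$, cofinality supplies some $\alpha \in \mathcal A$ refining a closed refinement of $\omega$, so $f(\alpha) \subset W$ for some $W \in \omega$. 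Topological completeness makes this filter convergent, and since each $f(\alpha)$ is closed the limit lies in $\bigcap_{\alpha} f(\alpha)$.

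For the first bijection I would prove (M1) by direct construction and reduce (M2) to a known result. For (M1), given a continuous $f : X \to K$, the family $\{f^{-1}(\text{star}(v)) : v \in K^{(0)}\}$ is a normal open cover of $X$ (pullback via $f$ of the normal open cover of $K$); by cofinality of $\mathcal A$ some $\alpha \in \mathcal A$ refines a closed refinement of it, and setting $\lambda = \{\alpha\}$ so that $\cov_{\lambda} = \alpha$, I choose for each $V \in N_{\lambda}^{(0)}$ a vertex $v(V) \in K^{(0)}$ with $V \subset f^{-1}(\text{star}(v(V)))$. If $V_{1}, \ldots, V_{k}$ span a simplex of $N_{\lambda}$, any $x \in V_{1} \cap \cdots \cap V_{k}$ certifies that $v(V_{1}), \ldots, v(V_{k})$ all lie in the simplex of $K$ containing $f(x)$, so $V \mapsto v(V)$ extends simplicially to $g : N_{\lambda} \to K$, and the straight-line homotopy within the simplices of $K$ produces $g \circ p_{\lambda} \simeq f$. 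For (M2), instead of subdividing a homotopy on $X \times I$ directly, I would use the natural simplicial homotopy equivalence $N_{\lambda} \to \mathcal N(\cov_{\lambda})$ noted right after the definition of $N_{\lambda}$ to pro-equivalently replace $\mathbf N_{\mathcal A}$ by the classical Marde\v{s}i\'{c}--Segal canonical system over the cofinal family $\{\cov_{\lambda}\}$ of closed locally finite normal covers, whose HPol-expansion status is \cite[Ch.~I, \S6.4, Thm.~7]{MardesicSegal}; (M2) then transfers.

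For the second bijection in the paracompact case, Theorem \ref{prop-inclusion-F-N-nv} says that the inclusion system $(i_{\lambda} : N_{\lambda} \hookrightarrow F_{\lambda})$ is an isomorphism in pro-Top, inducing a bijection $\varinjlim [F_{\lambda}, K] \cong \varinjlim [N_{\lambda}, K]$ compatible with the two $p^{\sharp}$'s, so the $N$-bijection forces the $F$-bijection. The main obstacle is (M2): whereas (M1) is accomplished by a single cover, (M2) must realize a given homotopy $X \times I \to K$ through one bonding map $\pi_{\lambda}^{\mu}$, and the pro-equivalence with the classical nerve system is the cleanest way to avoid a technically delicate direct refinement argument.
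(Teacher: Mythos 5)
Your treatment of conditions I) and II) and of the paracompact case is essentially the paper's: I) by star-refining a two-element normal cover (equivalently, by small uniform covers), II) by observing that the filter generated by $\{f(\alpha)\}$ is Cauchy in the finest uniformity and using completeness plus closedness of the $f(\alpha)$, and the bijection (\ref{eq:directlim2}) by Theorem~\ref{prop-inclusion-F-N-nv}. Where you diverge is the central bijection (\ref{eq:directlim1}): the paper gets both (M1) and (M2) in one stroke by citing Morita's Theorem~4.3 of \cite{MoritaCCoh}, adding only the remarks that closed and open locally finite normal covers are mutually cofinal and that $N_\lambda$ may replace the nerve $\mathcal{N}(\cov_\lambda)$ via the natural simplicial homotopy equivalence; you instead prove (M1) directly (your construction of the simplicial map $g$ from a cover of the form $\{f^{-1}(\mathrm{star}(v))\}$ and the straight-line homotopy is sound) and transfer (M2) from a classical system. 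That transfer works, but as stated it is the weakest link: \cite[Ch.~I, \S 6.4, Thm.~7]{MardesicSegal} produces a \emph{resolution} built from \emph{open} normal covers of an arbitrary space, so to extract what you need you must also invoke the fact that a polyhedral resolution is an HPol-expansion, the open/closed cofinality remark to pass to the family $\{\cov_\lambda\}$, and the check that the levelwise homotopy equivalences $N_\lambda \to \mathcal{N}(\cov_\lambda)$ commute up to homotopy (contiguity) with the classical bonding maps and with the canonical maps from $X$, so that you really have an isomorphism in pro-HTop under $X$ along which (M2) transfers; assembled, this chain is essentially the single citation of Morita's theorem that the paper uses, so your route buys an explicit (M1) at the cost of a longer reduction for (M2). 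One small technical point to repair: the supports of a partition of unity subordinated to a locally finite open refinement are closed, locally finite and refining, but they need not form a normal cover in the paper's sense (one needs $\supp\phi_V \subset \vint(\supp\phi_V)$, which can fail); use instead a closed shrinking pulled back from a metric quotient, as the paper does implicitly in the proofs of Theorem~\ref{prop-inclusion-F-N-nv} and Theorem~\ref{thm-Complete-cell-structure}.
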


\begin{proof}
First notice that the family of locally finite open normal covers is cofinal in the family of normal covers.
Indeed as was shown in \S2 of \cite{MoritaCCoh},
for an open normal cover \(\mathcal{U}\) in \(X\) there exist a continuous map \(f:X\rightarrow Y\) to a metric space \(Y\) and an 
open cover \(\mathcal{V}\) of \(Y\) such that the cover 
\( f^{-1}(\mathcal{V})\) refines \(\mathcal{U}\) (see also \cite[Chap.I, section 6.2, Lemma 1]{MardesicSegal}).  By the paracompactness of $Y$, there exists a locally finite open cover \(\mathcal{W}\) of \(Y\) that refines \(\mathcal{V}\). 
Then \(f^{-1}(\mathcal{W})\) is a locally finite, open, normal cover of \(X\) refining cover \(\mathcal{U}\).

We show that $\mathcal A$ satisfies the conditions I) and II). 
Let \(U\subset X\) be an open set and \(x\in U\).
There exists a continuous function  \(\phi:X\rightarrow [0,1]\) such that \(\phi(x)=1\), 
$\mathrm{supp}(\phi) := \mathrm{cl}\phi^{-1}((0,1]) \subset U$, and 
\(\phi \equiv 1\) on a neighborhood of \(x\).
Therefore \(\{U,X\setminus\{x\}\}\) is a normal cover with partition of unity \(\{\phi,1-\phi\}\).
Take a normal open cover \(\mathcal{U}\) which is a star refinement of \(\{U,X\setminus\{x\}\}\).
By the cofinality of \(\mathcal{A}\) and the remark at the beginning of the proof, there exists an \(\alpha\in\mathcal{A}\) which refines \(\mathcal{U}\), and thus \(\alpha\) is a star refinement of \(\{U,X\setminus\{x\}\}\).
This implies  the condition I).  To verify the condition II), let 
$\{f(\alpha)~|~ \alpha\in\mathcal{A}\}$ be a family of subsets with the finite intersection property such that $f(\alpha)\in\alpha$ for each $\alpha \in \mathcal A$.  By the cofinality of $\mathcal A$ and the cofinality of the  family of the locally finite normal open coverings in that of the open normal coverings, the collection $\{f(\alpha)~|~\alpha\in {\mathcal A}\}$ contains arbitrarily small sets with respect to normal open covers.  By recalling that the finest uniformity is generated by the family of normal open covers of $X$ (\cite[Chap.8, 8.1.C]{EngelkingGT}), we conclude that 
$\bigcap f(\alpha) \neq \emptyset$ by the topological completeness of $X$.

The bijection (\ref{eq:directlim1}) is a consequence of \cite[Theorem 4.3]{MoritaCCoh}. 
Notice, that the use of open/closed covers causes no essential difference since 
each of the family of closed/open covers involved is cofinal in the other.
Notice also that the nerve complex $\mathcal{N}(\cov_{\lambda})$ may be replaced with the complex $N_{\lambda}$ because they are homotopically equivalent by a natural homotopy equivalence $N_{\lambda}\to \mathcal{N}(\cov_{\lambda})$.

If \(X\) is a paracompact then the second conclusion (\ref{eq:directlim2})
follows from Theorem
\ref{prop-inclusion-F-N-nv}.   
\end{proof}
Note that a closed cover $\mathcal G$ of a paracompact space $X$ with $\bigcup \{ \mathrm{int}G~|~G \in {\mathcal G}\} = X$ is a normal cover. 

A inverse system $\mathbf X$ of polyhedra with continuous bonding maps may fail to be an HPol-expansion of the limit space $\varprojlim {\mathbf X}$ when the limit space is not compact (\cite[Chap. I, Section 6, Example 1]{MardesicSegal}).  It follows from Proposition \ref{prop-bijection-top} that the inverse systems ${\mathbf N}_{\mathcal A}$ and 
${\mathbf F}_{\mathcal A}$ constructed above are HPol-expansions of the limit space  $N_{\infty}=F_{\infty}$ under a suitable assumption on the space $X$ and the family $\mathcal A$.  

\begin{cor}
\label{cor-bijection-top}
Let $X$ be a topologically complete space and let $\mathcal A$ be a family of closed covers as in Proposition \ref{prop-bijection-top}.  Then
we have a bijection
\[
\pi^{\sharp}:\varinjlim ([N_{\lambda},K],(\pi_{\lambda}^{\mu})^{\#},\Lambda) \rightarrow [N_\infty,K],
\]
for each simiplicial complex $K$.  If moreover \(X\) is paracompact, then the same holds for $(F_{\lambda},\pi_{\lambda}^{\mu};\Lambda)$.

\end{cor}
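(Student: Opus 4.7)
The plan is to reduce the statement to Proposition \ref{prop-bijection-top} by exploiting the homotopy equivalence between $N_\infty$ and $X$ provided by the pair $(\pi,p)$ from Theorem \ref{thm:pi}. First I would verify that $\pi^{\sharp}$ is well-defined: the identity $\pi_\lambda = \pi_\lambda^\mu \circ \pi_\mu$ for $\lambda\leq \mu$ yields $\pi_\lambda^{\#} = \pi_\mu^{\#}\circ (\pi_\lambda^\mu)^{\#}$ after applying the contravariant functor $[-,K]$, so the maps $\pi_\lambda^{\#}:[N_\lambda,K]\to [N_\infty,K]$ are compatible with the direct system and factor through its colimit.

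Next, I would exploit the relation $p_\lambda = \pi_\lambda \circ p$ established in Section \ref{sec-constr}, which passes to $p_\lambda^{\#} = p^{\#}\circ \pi_\lambda^{\#}$ for each $\lambda$ and, at the direct limit, to the factorization
\[
p^{\sharp} = p^{\#}\circ \pi^{\sharp},
\]
where $p^{\#}:[N_\infty,K]\to [X,K]$ is the function induced by $p:X\to N_\infty$. By Theorem \ref{thm:pi}, we have $\pi\circ p = \id_X$ and $p\circ \pi$ is homotopic to $\id_{N_\infty}$, so $p^{\#}$ is a bijection with inverse $\pi^{\#}$. Proposition \ref{prop-bijection-top} supplies the bijectivity of $p^{\sharp}$. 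Combining these, $\pi^{\sharp} = (p^{\#})^{-1}\circ p^{\sharp}$ is a composition of bijections, establishing the first assertion.

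For the second assertion, under the paracompactness of $X$, Theorem \ref{prop-inclusion-F-N-nv} shows that the system of inclusions $(N_\lambda \hookrightarrow F_\lambda)$ is an isomorphism in pro-Top; applying $[-,K]$ turns this into an isomorphism of the associated direct systems in \textbf{Sets}, hence a bijection $\varinjlim [N_\lambda,K] \cong \varinjlim [F_\lambda,K]$ that is compatible with the projections onto $[N_\infty,K] = [F_\infty,K]$ (using Proposition \ref{thm-equal-limits-2}). Repeating the factorization argument with $F_\lambda$ in place of $N_\lambda$ then yields the desired bijection for the $F$-system.

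The main obstacle I expect is verifying naturality: making precise that the level-wise identities $p_\lambda^{\#} = p^{\#}\circ \pi_\lambda^{\#}$ pass cleanly to the displayed factorization $p^{\sharp} = p^{\#}\circ \pi^{\sharp}$ at the colimit, and that the pro-Top isomorphism of Theorem \ref{prop-inclusion-F-N-nv} really does induce a bijection between the two colimits compatible with the projections to the common limit $N_\infty = F_\infty$. Both checks should be bookkeeping rather than substantive mathematics, but they are the places where care is needed.
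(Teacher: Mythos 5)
Your argument is correct and is essentially the paper's intended route: the corollary is stated there as an immediate consequence of Proposition \ref{prop-bijection-top} together with the homotopy equivalence $(\pi,p)$ of Theorem \ref{thm:pi}, via exactly the factorization $p^{\sharp}=p^{\#}\circ\pi^{\sharp}$ coming from $p_{\lambda}=\pi_{\lambda}\circ p$. Your extra detour through the pro-Top isomorphism of Theorem \ref{prop-inclusion-F-N-nv} in the paracompact case is harmless but unnecessary, since the second bijection of Proposition \ref{prop-bijection-top} lets you repeat the same factorization directly for the system $(F_{\lambda},\pi_{\lambda}^{\mu};\Lambda)$.
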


For compact spaces we obtain the same conclusion under a weaker hypothesis.
\begin{prop}
\label{prop-bijection-topcomp}
If \(X\) is a compact Hausdorff space and \(\mathcal{A}\) is a family satisfying the condition I), then $\mathcal A$ is cofinal in the family of  
all closed, locally finite, normal covers of \(X\). Thus we obtain the bijections 
of Proposition \ref{prop-bijection-top}.  In particular  \(\bf{N}_{\mathcal A}\) and \(\bf{F}_{\mathcal A}\) are HPol-expansions of the limit space 
$N_{\infty} = F_{\infty}$.
\end{prop}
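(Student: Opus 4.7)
\medskip

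The plan is to verify in turn (a) that condition II) is automatic for compact $X$, (b) the cofinality claim, and (c) the bijection and HPol-expansion conclusions via Proposition \ref{prop-bijection-top} and Corollary \ref{cor-bijection-top}. For (a), in a compact Hausdorff space every family of closed subsets with the finite intersection property has non-empty intersection, which is exactly what II) asserts.

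For (b), fix a closed, locally finite, normal cover $\gamma$ of $X$. Local finiteness together with compactness forces $\gamma$ to be finite, say $\gamma = \{G_1, \ldots, G_n\}$. Normality supplies a partition of unity $\{\phi_i\}$ with $\supp(\phi_i) \subset \vint(G_i)$, and since $\sum_i \phi_i \equiv 1$ the interiors $\vint(G_1), \ldots, \vint(G_n)$ cover $X$. For each $x \in X$ choose $i(x)$ with $x \in \vint(G_{i(x)})$ and apply condition I) with $U = \vint(G_{i(x)})$ to obtain $\alpha_x \in \mathcal{A}$ satisfying $\bigcup \mathrm{star}_{\alpha_x}(x) \subset \vint(G_{i(x)})$.

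Next I introduce $\sigma_x := X \setminus \bigcup\{A \in \alpha_x : x \notin A\}$; this is open by the local finiteness of the closed cover $\alpha_x$, contains $x$, and has the defining property that every $A \in \alpha_x$ meeting $\sigma_x$ must contain $x$ (if $x \notin A$ then $A$ lies inside the removed union and so is disjoint from $\sigma_x$). By compactness of $X$, pick finitely many $y_1, \ldots, y_m$ with $X = \sigma_{y_1} \cup \cdots \cup \sigma_{y_m}$ and set $\lambda = \{\alpha_{y_1}, \ldots, \alpha_{y_m}\} \in \Lambda$. I claim that $\cov_\lambda$ refines $\gamma$: if $v = (A_1, \ldots, A_m) \in N_\lambda^{(0)}$ with $\wedge v = A_1 \cap \cdots \cap A_m \neq \emptyset$, pick $z \in \wedge v$ and $k$ with $z \in \sigma_{y_k}$. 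Then $A_k$ meets $\sigma_{y_k}$ at $z$, so $y_k \in A_k$, whence $A_k \in \mathrm{star}_{\alpha_{y_k}}(y_k)$ and $A_k \subset \vint(G_{i(y_k)})$; therefore $\wedge v \subset A_k \subset G_{i(y_k)}$.

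This shows that the family $\{\cov_\lambda : \lambda \in \Lambda\}$ of finite common refinements is cofinal in the closed, locally finite, normal covers of $X$, which is the effective form of the cofinality of $\mathcal{A}$ used by the inverse system construction. Plugging this into Proposition \ref{prop-bijection-top} gives the two bijections, and Corollary \ref{cor-bijection-top} upgrades these to the HPol-expansion of $N_\infty = F_\infty$. The main obstacle is bridging the gap between the pointwise nature of condition I) and a global refinement of the given cover $\gamma$; the key is that normality of $\gamma$ forces $\{\vint(G_i)\}$ to be an open cover, so condition I) can feed $\gamma$ directly, and compactness then collapses the (a priori infinite) pointwise data into a single index $\lambda \in \Lambda$ whose associated cover $\cov_\lambda$ refines $\gamma$.
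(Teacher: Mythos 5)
Your proof is correct, and it reaches the same effective conclusion as the paper, but by a genuinely different (more constructive) route for the cofinality step. The paper argues by contradiction: for a finite closed normal cover $\gamma$ it considers, for each $\lambda\in\Lambda$, the closed set $F_\lambda$ of ``bad'' points $x$ whose star $\bigcup\mathrm{star}_{\cov_\lambda}(x)$ is contained in no $\vint(G)$, $G\in\gamma$; these sets are nested ($F_\mu\subset F_\lambda$ for $\mu\geq\lambda$), so if all were nonempty, compactness would give a common bad point $x$, and condition I) applied at $x$ (with $U=\vint(G)\ni x$) yields a $\lambda$ contradicting badness. Thus some single $\cov_\lambda$ satisfies the pointwise star condition at \emph{every} point, which gives the refinement. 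You instead build the refining index directly: choose $\alpha_x$ via I) at each $x$, pass to the open sets $\sigma_x=X\setminus\bigcup\{A\in\alpha_x : x\notin A\}$, extract a finite subcover, and let $\lambda$ be the finitely many chosen covers; the verification that $\cov_\lambda$ refines $\gamma$ is then a short direct computation. Both arguments use the same two ingredients (normality of $\gamma$ forcing $\{\vint(G_i)\}$ to cover, and compactness to globalize the pointwise data from I)); yours exhibits $\lambda$ explicitly and avoids checking closedness of the sets $F_\lambda$, while the paper's version yields the slightly stronger pointwise-star statement for a single $\lambda$. One shared caveat: like the paper's own proof, what you actually establish is cofinality of the derived family $\{\cov_\lambda : \lambda\in\Lambda\}$, not of $\mathcal{A}$ itself (a single member of $\mathcal{A}$ need not refine $\gamma$ under hypothesis I) alone); you flag this honestly as the ``effective form'' used in Proposition \ref{prop-bijection-top} via Morita's theorem, which is exactly how the paper uses it as well, so this is not a gap relative to the paper. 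Your preliminary observation that condition II) is automatic for compact Hausdorff $X$ is correct and harmless, though not needed once the cofinality is fed into Proposition \ref{prop-bijection-top}.
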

\begin{proof}
We show the first statement. The rest follows from Proposition \ref{prop-bijection-top}.
For a finite closed normal cover \(\alpha\) of \(X\), we show that 
there is $\lambda \in \Lambda$ such that, for each $x\in X$, $\mathrm{star}_{\lambda}(x) \subset \mathrm{int}F$ for some $F\in \alpha$.
This implies that $\lambda$ refines $\alpha$.  Suppose that 
\(F_{\lambda}=\{x\in X\mid \bigcup\mathrm{star}_{\lambda}(x)\not\subset \mathrm{int}(F)~\forall F\in\alpha \} \ne \emptyset\) for each $\lambda$. Each \(F_{\lambda}\) is a closed set and \(F_{\mu}\subset F_{\lambda}\) for \(\mu\geq\lambda\). Then \(\bigcap F_{\lambda}\not=\emptyset\) and take a point \(x\in\bigcap F_{\lambda}\).
Thus we have  \(x\in\vint(F)\) for some \(F\in\alpha\).
Then, by the condition I), we obtain \(\bigcup\mathrm{star}_{\alpha}(x)\subset \vint(F)\) for some \(\lambda\in\Lambda\), a contradiction. 
\end{proof}

\section{Cell structures and polyhedral expansions}

A \emph{graph} is an ordered pair  \((G,r)\) of a (possibly infinite) discrete set \(G\) and a symmetric and reflexive binary relation \(r\) on \(G\).  Two elements $a,b \in G$ are said to be {\it adjacent} if $(a,b) \in r$.  For \(a\in G\) let \(\text{star}_r(a) =\{b \in G|(a,b)\in r\}\). A graph homomorphism \(f:(G,r)\rightarrow (G',r')\) between graphs, that is a function $f:G \to G'$ satisfying \((a,b)\in r\) implies  \((f(a),f(b))\in r'\), is termed as a \emph{continuous map} here. For an inverse system of graphs $\mathbf{G} = (G_{\lambda},\pi_{\lambda}^{\mu};\Lambda)$ we continue to use the notation of Section 2: let $G_{\infty} = \varprojlim(G_{\lambda},\pi_{\lambda}^{\mu};\Lambda)$ and for \(\lambda\),  
let \(\pi_{\lambda}:G_{\infty}\rightarrow G_{\lambda}\) be the \(\lambda^{th}\) coordinate projection.  For a point $z \in G_{\infty}$, the element $\pi_{\lambda}(z) \in G_{\lambda}$ is also denoted by $z(\lambda)$.
A \emph{cell-structure} \(\mathbf{G} = (G_{\lambda},\pi^{\mu}_{\lambda}:G_{\mu} \to G_{\lambda}; \Lambda)\) is an inverse system of  graphs and continuous bonding maps satisfying the conditions a) and b) below. 
\begin{enumerate}
\item[a)]
for each \(z\in G_{\infty}\) and \(\lambda\in\Lambda\), there exists \(\mu\geq \lambda\) such that
\[
\pi^{\mu}_{\lambda}(\text{star}_{r_{\mu}}^2(z(\mu))) \subset \text{star}_{r_{\lambda}}(z(\lambda)),
\]
\item[b)] for each \(z\in G_{\infty}\) and
  \(\lambda\in\Lambda\), there exists \(\mu\geq\lambda\) such that
 \(\pi^{\mu}_\lambda(\text{star}_{r_{\mu}}(z(\mu)))\) is finite.
\end{enumerate}
A point of $G_{\infty}$ is called a {\it thread}. 
Two threads \(z,z'\in G_{\infty}\) are \emph{equivalent}, written as \(z\sim z'\), if $z(\lambda)$ and $z'(\lambda)$ are adjacent in $G_{\lambda}$ for each \(\lambda\). 
By \cite[Theorem 3.4]{DebskiTymchatyn},
the relation \(\sim\) is a closed equivalence relation with compact equivalence classes and the quotient space $G^{\ast} = G_{\infty}/\sim$ with the quotient topology induced by $G_\infty$, is called the \emph{topological space represented by the cell structure} 
\({\mathbf G}= (G_{\lambda},\pi^{\mu}_{\lambda};\Lambda)\).  The quotient projection is denoted by $\pi_{\mathbf G}:G_{\infty} \to  
G^{\ast}$.  It is a perfect map because it is a closed map with fibers being compact equivalence classes.
A net \(y:\Lambda\rightarrow\bigcup_{\lambda}G_{\lambda}\) indexed by $\Lambda$ 
with \(y(\lambda)\in G_{\lambda}\) for each \(\lambda\) is called a \emph{Cauchy net} 
if there exists  \(\lambda\in\Lambda\) such that,  for each
  \(\lambda^0\), we have \(\pi^{\lambda^1}_{\lambda^0}(y(\lambda^{1}))\) and
   \(\pi^{\lambda^2}_{\lambda^0}(y(\lambda^{2}))\) are adjacent for all 
 \(\lambda^{1},\lambda^{2} \geq\lambda,\lambda^0\).
A Cauchy net \(y\) is said to \emph{converge} to a thread \(z\in G_{\infty}\) if there exists \(\lambda\in\Lambda\) such that \(y(\mu)\) and \(z(\mu)\) are adjacent for each \(\mu\geq\lambda\).
The cell structure is \emph{complete} if every Cauchy net converges to a thread.

The construction given in Section 2 yields cell structures associated with topologically complete spaces.
Let \(X\) be a topologically complete space and let \(\mathcal{A}\) be a set of  closed, locally finite, normal covers of  \(X\) which\(\) satisfies the conditions I)-II) of Section 2. 
Under the notation of Section 2, the set of vertices  \(N^{(0)}_\lambda= F^{(0)}_\lambda\), 
a discrete set, admits the symmetric and reflexive relation \(r_\lambda\)  defined by \((u,v)\in r_\lambda\), $u,v \in F_{\lambda}^{(0)}$, if and only if \(\wedge u \cap \wedge v \ne \emptyset\), or equivalently $u$ and $v$ span an 1-simplex of $F_{\lambda}$.
Thus the graph \((F^{(0)}_\lambda,r_\lambda) \) is isomorphic to the 1-skeleton of 
\( F_\lambda\).
The next theorem shows that the inverse system \(\mathbf{F}^{(0)}= (F^{(0)}_\lambda,\pi^{\mu}_\lambda,\Lambda)\) defined in Section 2, 
with the above relation $r_\lambda$ for each $\lambda \in \Lambda$, is a cell structure.  The 
space $F_{\infty}^{(0)} = \varprojlim \mathbf{F}^{(0)}$ with the quotient projection 
$\pi_{{\mathbf F}^{(0)}}:F_{\infty}^{(0)} \to F_{\infty}^{(0)}/\sim$ represent the space $X$ in a canonical way.
Part of this theorem was first proved in
 \cite[Theorem 8.1]{ DebskiTymchatyn}.

\begin{thm}
\label{thm-Complete-cell-structure}
Let $X$ be a topologically complete space and let \(\mathcal{A}\) be a cofinal subfamily of the family of all closed, locally finite, normal covers of \(X\).
Then the inverse system \(\mathbf{F}^{(0)}= (F^{(0)}_\lambda,\pi^{\lambda'}_\lambda,\Lambda)\) is a complete cell structure 
representing a space canonically homeomorphic to $X$: there exists a unique homeomorphism $h:X \to {(F^{(0)})}^{\ast}:= F_{\infty}^{(0)}/\sim$ from $X$ to the quotient space ${(F^{(0)})}^{\ast}$ defined by ${\mathbf F}^{(0)}$ such that 
$h\circ \pi|F_{\infty}^{(0)} = \pi_{{\mathbf F}^{(0)}}$.
\end{thm}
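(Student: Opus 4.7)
The plan is to verify four claims: (i) conditions a) and b) of the cell-structure definition hold for $\mathbf F^{(0)}$; (ii) this cell structure is complete; (iii) there exists a homeomorphism $h : X \to (F^{(0)})^*$ as described; and (iv) $h$ is uniquely determined by the stated equation. The backbone is the observation that the perfect map $\pi : F_\infty \to X$ from Theorem \ref{thm:pi}(1), restricted to the closed subspace $F_\infty^{(0)}$, identifies exactly the $\sim$-equivalence classes by Proposition 2.4(2).

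For (i), fix $z \in F_\infty^{(0)}$ and $\lambda \in \Lambda$ and set $x = \pi(z)$ (well defined by Lemma \ref{lem-n-cov-int-onepointN}(2)). Axiom a) follows by choosing a neighborhood $U$ of $x$ with the property that $\wedge u \cap U \neq \emptyset$ implies $x \in \wedge u$ for every $u \in N_\lambda^{(0)}$ (available by local finiteness of $\cov_\lambda$), then picking $\mu$ via Lemma \ref{lem-n-cov-int-onepointN}(1) with $\bigcup \mathrm{star}^3_{\cov_\mu}(x) \subset U$; chasing definitions, any $v \in \mathrm{star}^2_{r_\mu}(z(\mu))$ has $\wedge v \in \mathrm{star}^3_{\cov_\mu}(x)$, hence $\wedge v \subset U$, forcing $x \in \wedge \pi^\mu_\lambda(v)$ and thus $\pi^\mu_\lambda(v) \sim z(\lambda)$. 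Axiom b) is analogous: choose $W \ni x$ meeting only finitely many vertices of $N_\lambda^{(0)}$ and $\mu$ with $\bigcup \mathrm{star}_{\cov_\mu}(x) \subset W$; any $w$ adjacent to $z(\mu)$ then has $\wedge w \cap W \neq \emptyset$, so $\pi^\mu_\lambda(w)$ lands in a fixed finite set.

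For (ii), given a Cauchy net $y$ with threshold $\lambda_0$, pick $\bar y(\mu) \in \wedge y(\mu) \subset X$. I first show $\bar y$ is Cauchy in the finest uniformity of $X$: for any open normal cover $\mathcal U$, star-refine it and (using the cofinality argument in the proof of Proposition \ref{prop-bijection-top}) select $\alpha \in \mathcal A$ refining the star refinement; the Cauchy condition at level $\{\alpha\}$ forces the sets $V^\alpha_\mu$, hence the $\bar y(\mu)$, eventually into a common element of $\mathcal U$. By topological completeness, $\bar y(\mu) \to x$ for some $x \in X$. A local-finiteness argument per $\alpha$ then produces an index $\mu^*_\alpha \in \Lambda$ such that $x \in V^\alpha_\mu$ whenever $\mu \geq \mu^*_\alpha$ contains $\alpha$ (only finitely many $V \in \alpha$ meet a small neighborhood of $x$, and those not containing $x$ are eventually avoided by $\bar y(\mu)$). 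With $C_\lambda = \{w \in N_\lambda^{(0)} : x \in \wedge w\}$ as in the proof of Theorem \ref{thm:pi}(1), the inverse system $(C_\lambda, \pi^\mu_\lambda;\Lambda)$ is an inverse system of finite sets with surjective bondings. Define the closed subset $A_\mu := \{z \in \varprojlim C_\lambda : z(\mu) \sim y(\mu)\}$ of the compact space $\varprojlim C_\lambda$ for $\mu \geq \lambda_0$; the family $\{A_\mu\}$ has the finite intersection property, for given $\mu_1, \ldots, \mu_k \geq \lambda_0$ one picks $\mu^* \geq \mu_1, \ldots, \mu_k$ and then $\mu' \geq \mu^*$ with $\mu' \geq \mu^*_\alpha$ for every $\alpha \in \mu^*$ (a finite demand), and observes that $w := \pi^{\mu'}_{\mu^*}(y(\mu')) \in C_{\mu^*}$ while $\pi^{\mu^*}_{\mu_i}(w) = \pi^{\mu'}_{\mu_i}(y(\mu'))$ is adjacent to $y(\mu_i)$ by the Cauchy condition; lifting $w$ to a full thread via surjectivity on $(C_\lambda)$ yields an element of $\bigcap_i A_{\mu_i}$. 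Compactness of $\varprojlim C_\lambda$ then gives $\bigcap_\mu A_\mu \neq \emptyset$, and any element of this intersection is the desired limit of $y$.

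For (iii) and (iv), since $F_\infty^{(0)}$ is closed in $F_\infty$ and $\pi$ is perfect, the restriction $\pi|_{F_\infty^{(0)}} : F_\infty^{(0)} \to X$ is closed, surjective (each $\varprojlim C_\lambda$ is nonempty), with compact fibers $\varprojlim C_\lambda$; hence it is perfect and, in particular, a quotient map. By Proposition 2.4(2) its fibers coincide with the $\sim$-equivalence classes, so $\pi|_{F_\infty^{(0)}}$ and $\pi_{\mathbf F^{(0)}}$ are quotient maps with identical fibers; the universal property yields a homeomorphism $\tilde h : (F^{(0)})^* \to X$ with $\tilde h \circ \pi_{\mathbf F^{(0)}} = \pi|_{F_\infty^{(0)}}$, and we set $h := \tilde h^{-1}$. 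Uniqueness of $h$ is immediate from the defining equation and surjectivity of $\pi|_{F_\infty^{(0)}}$. I expect the main obstacle to be the completeness step (ii): converting the per-$\alpha$ eventual containment $x \in V^\alpha_\mu$ into a uniform conclusion about a single thread, which is precisely why the compactness/FIP argument on $\varprojlim C_\lambda$ (where threads are forced from finite compatible data via the key identity $\pi^{\mu^*}_{\mu_i} \circ \pi^{\mu'}_{\mu^*} = \pi^{\mu'}_{\mu_i}$) is essential.
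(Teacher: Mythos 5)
Your proof is correct, and its skeleton largely matches the paper's: the cell-structure axioms a) and b) are verified exactly as in the paper (a neighborhood of $\pi(z)$ controlled by local finiteness of $\cov_\lambda$ together with Lemma \ref{lem-n-cov-int-onepointN}(1) applied to $\mathrm{star}^3$), and the identification of $X$ with $F^{(0)}_\infty/\sim$ rests, as in the paper, on Proposition 2.4(2) plus the fact that $\pi$ restricted to the closed subset $F^{(0)}_\infty$ is a perfect surjection with fibers $\varprojlim C_\lambda$ (the paper compresses this into one sentence; your quotient-map argument is the intended one, and your uniqueness remark is the right triviality). The genuine divergence is the completeness step. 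The paper forms the closed sets $F_\lambda=\cl\bigl(\bigcup_{\mu\ge\lambda}\wedge y(\mu)\bigr)$, shows via cofinality and star-refinements that they contain arbitrarily small sets, extracts $x\in\bigcap_\lambda F_\lambda$, deduces the finiteness property $(\ast)$ for the tails $\{\pi^\nu_\lambda(y(\nu))\mid\nu\ge\mu\}$, and then invokes \cite[Lemma 3.7]{DebskiTymchatyn} to conclude convergence. You instead locate $x$ as the limit of a Cauchy net of points $\bar y(\mu)\in\wedge y(\mu)$ (the same completeness/cofinality mechanism in pointwise form) and then build the limiting thread yourself: the finite sets $C_\lambda=\{w\in N^{(0)}_\lambda\mid x\in\wedge w\}$ with surjective bonding maps give a compact $\varprojlim C_\lambda$, and your FIP argument with the thresholds $\mu^*_\alpha$ (eventual containment $x\in V^\alpha_\mu$, coordinate by coordinate) produces a thread adjacent to $y(\mu)$ for all $\mu\ge\lambda_0$. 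This buys self-containedness---you never need the cited Lemma 3.7, and the limit thread is exhibited explicitly inside $\pi^{-1}(x)\cap F^{(0)}_\infty$---at the cost of extra bookkeeping (the per-$\alpha$ eventual containment and the surjectivity of $\pi^\mu_\lambda|C_\mu$), whereas the paper's route is shorter because the convergence criterion is outsourced. Two cosmetic points: you write $\sim$ for adjacency of single vertices, while the paper reserves $\sim$ for equivalence of threads; and when applying Lemma \ref{lem-n-cov-int-onepointN}(1) you should take the resulting index $\ge\lambda$ by adjoining $\lambda$, which is harmless and is also implicit in the paper.
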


\begin{proof}
Recall that $\mathcal A$ satisfies the conditions I) and II) by Proposition 2.7. We show that $\mathbf{F}^{(0)}$ is a cell structure.  For this, let
\(z\in F^{(0)}_\infty\). By the definition of $\pi$ we have
\(\bigcap_{\lambda\in \Lambda}z(\lambda) =\{\pi(z)\}\). For \(\lambda\in \Lambda\),
let \( V = X\setminus \bigcup \{U \in \cov_\lambda|\pi(z)\notin  U\}\). 
Then \(V\) is an open neighborhood of \(\pi(z)\) and meets only finitely many elements of \(\cov_{\lambda}\). By Lemma \ref{lem-n-cov-int-onepointN} (1), there exists \(\lambda'\ge \lambda\) so
 \(\text{star}^3_{\cov_{\lambda'}}(\pi(z))\subset V\). 
 Then  \(\pi^{\lambda'}_\lambda(\text{star}^2_{\cov_{\lambda'}}(z(\lambda'))) \subset \pi^{\lambda'}_{\lambda}(\text{star}^3_{\cov_\lambda}(\pi(z)))\subset \text{star}_{\cov_\lambda}(z(\lambda))\). 
Thus \(\mathbf{F}^{(0)} = (F^{(0)}_\lambda,\pi^{\mu}_\lambda,\Lambda)\) satisfies the conditions a) and b) and is a cell structure.

In order to prove the completeness, let $(y(\lambda))_{\lambda \in \Lambda}$ be a Cauchy net with $y(\lambda) \in F_{\lambda}^{(0)} (\lambda \in \Lambda)$.  
We show 
\begin{equation*}
(\ast)~~\mbox{for each}~ \lambda, ~\mbox{there exists}~\mu \geq \lambda~\mbox{such that}~ A_{\lambda,\mu} := \{ \pi_{\lambda}^{\nu}(y(\nu))~|~\nu \geq \mu\}~\mbox{is a finite set,}
\end{equation*}
which implies that $(y(\lambda))_{\lambda}$ converges to a thread by 
 \cite[Lemma 3.7]{DebskiTymchatyn}.  
Take $\lambda^{-1} \in \Lambda$ such that for each $\lambda^{0},
\lambda^{1}, \lambda^{2}$ with $\lambda^{1}, \lambda^{2} \geq \lambda^{0}, \lambda^{-1}$ we have
\begin{equation}\label{eq:cauchy}
\pi_{\lambda^0}^{\lambda^1}(y(\lambda^{1}))~\mbox{and}~
\pi_{\lambda^0}^{\lambda^2}(y(\lambda^{2}))~\mbox{are adjacent, that is,}~
\wedge \pi_{\lambda^0}^{\lambda^1}(y(\lambda^{1})) \cap \wedge \pi_{\lambda^0}^{\lambda^2}(y(\lambda^{2})) \neq \emptyset.
\end{equation}
For $\lambda \in \Lambda$, let \(F_{\lambda} = \cl(\bigcup_{\mu\geq\lambda}\wedge y(\mu))\). The family $\{F_{\lambda}\}$ has the finite intersection property.
We first verify
\begin{equation*}
\begin{array}{ll}
(\ast\ast)~~\mbox{for each locally finite, open, normal cover}~\mathcal U ~\mbox{of}~X, ~\mbox{there exist}~ \lambda \in \Lambda ~\mbox{and}~  U \in \mathcal U \\
\quad \quad\quad~\mbox{such that}~ F_{\lambda} \subset U.
\end{array}
\end{equation*}
Indeed, for a locally finite open normal cover \(\mathcal{U}\) take an open normal cover \(\mathcal{V}\) which $\mathrm{star}^{2}$-refines \(\mathcal{U}\).  By taking a closed shrinking and by using the cofinality of $\mathcal A$, we have an $\alpha \in {\mathcal A}$ that refines $\mathcal V$.  Take $\lambda \geq \{\alpha\}, \lambda^{-1}$.
Since $\cov_{\lambda}$ refines $\mathcal V$, there exists $V \in \mathcal V$ such that 
$\wedge y(\lambda) \subset V$.  For each $\mu \geq \lambda$, we see, by (\ref{eq:cauchy}), 
$\wedge \pi_{\lambda}^{\mu}(y(\mu)) \cap \wedge y(\lambda) \neq \emptyset$.  Hence we have $\wedge y(\mu) \subset \wedge \pi_{\lambda}^{\mu}(y(\mu)) \subset \bigcup \mathrm{star}_{\cov_\lambda}(\wedge y(\lambda)) \subset \mathrm{star}_{\mathcal V}(V)$
and thus $F_{\lambda} \subset \mathrm{star}_{\mathcal V}^{2}(V) \subset U$
for some $U \in \mathcal U$.  This proves $(\ast\ast)$.

Recall that the collection of the normal open covers forms a base of the finest uniformity of the Tychnoff space $X$.
The topological completeness of $X$ together with $(\ast\ast)$ then imply that the family $\{F_{\lambda}\}$ contains arbitrarily small sets with respect to the finest uniformity. This together with the finite intersection property imply that the intersection $\bigcap_{\lambda \in \Lambda} F_{\lambda}$ contains a point $x$.
For the proof of $(\ast)$, take an arbitrary $\lambda \in \Lambda$ and choose an open neighborhood $U$ of $x$ such that $\mathrm{star}_{\cov_\lambda}(U)$ is a finite set 
by the local finiteness of $\cov_{\lambda}$.  Applying $(\ast\ast)$ to the cover $\{U, X\setminus\{x\}\}$ and noticing $x \in \bigcap F_{\nu}$, we find $\mu \geq \lambda$ such that $F_{\mu} \subset U$.  For each $\nu \geq \mu$ we have $\wedge y(\nu) \subset F_{\mu} \subset U$ which implies $\wedge \pi_{\lambda}^{\nu}(y(\nu)) \cap U \neq \emptyset.$  Thus we have $A_{\lambda,\mu} \subset \mathrm{star}_{\cov_{\lambda}}(U)$ and $A_{\lambda,\mu}$ is a finite set.  This proves $(\ast)$ and proves the completeness of ${\mathbf F}^{(0)}$.

The last statement follows from the definitions of the relation $\sim$, the maps $\pi$(see also Proposition 2.4 (2))  and 
$\pi_{{\mathbf F}^{(0)}}$. Recall that $\pi_{{\mathbf F}^{(0)}}$ is a perfect map and note that \(\pi|_{N^0_{\infty}}:N^{(0)}_{\infty}\rightarrow X\) is also a perfect map since \(N^{(0)}_{\infty}\) is a closed subset of \(N_{\infty}\).
\end{proof}

For a graph $(G,r)$, we define a flag complex ${\mathcal F}(G)$ as follows: the vertex set of ${\mathcal F}(G)$ is the set of vertices of $G$;  a set of vertices $\{v_{1},\ldots, v_{n}\}$ spans a simplex of ${\mathcal F}(G)$ if $v_{i},v_{j}$ are adjacent in $G$ for each $i,j=1,\ldots, n$.  Every graph homomorphism 
$f:G \to H$ induces a simplicial map ${\mathcal F}(f):{\mathcal F}(G) \to {\mathcal F}(H)$.

\begin{rem}
A cell structure \({\mathbf G} = (G_{\lambda},\pi_{\lambda}^{\mu}, \Lambda) \) that represents a space $Y$ induces
an inverse system ${\mathcal F}({\mathbf G}) = ({\mathcal F}(G_{\lambda}), {\mathcal F}(\pi_{\lambda}^{\mu}), \Lambda)$ of flag complexes. 
Also the equivalence relation $\sim$ on $G_{\infty}$ naturally extends to the one on $\varprojlim {\mathcal F}({\mathbf G})$ 
in such a way that the quotient space $\varprojlim {\mathcal F}({\mathbf G})/\sim$ is homeomorphic to $Y$.  Applying this to the cell structure \({\mathbf F}^{(0)}\) of Theorem 3.1 
with a suitable family $\mathcal A$,
we see that the system \({\mathcal F}({\mathbf F}^{(0)})\) is precisely the inverse system
\({\mathbf F}_{\mathcal A} = (F_{\lambda},\pi_{\lambda}^{\mu};\Lambda )\) constructed in Section \ref{sec-constr}, and the quotient map $\varprojlim {\mathcal F}({\mathbf F}^{(0)}) \to \varprojlim {\mathcal F}({\mathbf F}^{(0)}) /\sim$ is identified with the homotopy equivalence $\pi:F_{\infty} \to X$.
In particular, Proposition 2.7, Corollary 2.8  and Proposition 2.9 hold for the system \({\mathcal F}({\mathbf F}^0)\).  
Thus a cell structure $\mathbf G$ of a paracompact space $X$ with a suitable family of closed normal covers $\mathcal A$  yields a polyhedral expansion of both of $X$ and the limit space $\varprojlim {\mathcal F}({\mathbf G})$.
\end{rem}

\begin{question}
For what class of space $X$ and for which cell structure $\mathbf G$ representing $X$ do we have a polyhedral expansion $\varprojlim {\mathcal F}({\mathbf G}) \to {\mathcal F}({\mathbf G})$ with a shape/homotopy equivalence $\varprojlim {\mathcal F}({\mathbf G}) \to X$?
\end{question}

For a space $X$ for which the above question has a positive answer, we would have a way to study the shape type of $X$ by means of cell structures representing $X$.

\end{document}